\newcommand{\BC}{{\mathbf C}}
\newcommand{\BQ}{{\mathbf Q}}
\newcommand{\BR}{{\mathbf R}}
\newcommand{\BZ}{{\mathbf Z}}
\newcommand{\CF}{{\mathcal F}}
\newcommand{\CM}{{\mathcal M}}
\newcommand{\CO}{{\mathcal O}}
\newcommand{\Mu}{{\mathrm{M}}}
\newcommand{\maxl}{{\textup{max}}}
\newcommand{\alphahat}{\widehat{\alpha}}
\newcommand{\betahat}{\widehat{\beta}}
\DeclareMathOperator{\degree}{deg}
\DeclareMathOperator{\End}{End}
\DeclareMathOperator{\Hom}{Hom}
\DeclareMathOperator{\Jac}{Jac}
\DeclareMathOperator{\PSL}{PSL}
\DeclareMathOperator{\Trace}{Trace}
\newtheorem{theorem}{Theorem}
\newtheorem{lemma}[theorem]{Lemma}
\newtheorem{corollary}[theorem]{Corollary}
\newtheorem{proposition}[theorem]{Proposition}
\theoremstyle{remark}
\newtheorem{rem}[theorem]{Remark}
\newtheorem{example}[theorem]{Example}
\theoremstyle{definition}
\begin{document}

\title[Curves with maps of every degree]
      {Curves of genus two with maps of every degree to a fixed elliptic curve}

\author{Everett W. Howe}
\address{Independent mathematician, 
         San Diego, CA 92104, USA}
\email{\href{mailto:however@alumni.caltech.edu}{however@alumni.caltech.edu}}
\urladdr{\href{http://ewhowe.com}{http://ewhowe.com}}

\date{5 June 2026}

\keywords{Hyperelliptic curve, elliptic curve, abelian surface, 
          split Jacobian, quadratic form}

\subjclass{Primary 14H45; Secondary 11G05, 11G10, 11G15, 11G30}


\begin{abstract}
We show that up to isomorphism there are exactly twenty pairs $(C,E)$, where $C$
is a genus-$2$ curve over $\BC$, where $E$ is an elliptic curve over~$\BC$, and
where for every integer $n>1$ there is a map of degree~$n$ from $C$ to~$E$. We
also show that for every genus-$2$ curve $C$, there is an integer $n$ with
$1 < n \le 59$ such that there is no minimal degree-$n$ map from $C$ to an
elliptic curve.
\end{abstract}

\maketitle

\section{Introduction}
\label{S-intro}
Curves of genus two that have nonconstant maps to elliptic curves have been 
studied for nearly $200$ years, beginning with work of Legendre in 1828. Below,
we will briefly review some of the work of the early researchers in the field
--- Legendre, Jacobi, Weierstrass, Kowalevski, Poincar\'e, Picard, Goursat,
Brioschi, and others --- but for now we will simply note that the problem we
consider in this paper is one that could have been understood by these authors,
with just a little tweaking of the terminology. Namely, we address the question
of whether there exists a genus-$2$ curve $C$ over the complex numbers~$\BC$, 
and an elliptic curve $E$ over $\BC$, such that for every $n>1$ there exists a
degree-$n$ morphism from $C$ to $E$. (``Is there a hyperelliptic integral that 
can be reduced, via transformations of every degree $n>1$, to expressions
involving the same elliptic integral?'')

Perhaps surprisingly, the answer is yes.

\begin{theorem}
\label{T:main}
Up to isomorphism, there are exactly twenty pairs $(C,E)$ such that
\begin{enumerate}
\item $C$ is a curve of genus $2$ over the complex numbers $\BC$\textup{;}
\item $E$ is an elliptic curve over $\BC$\textup{;} and
\item for every $n>1$ there is a map of degree $n$ from $C$ to~$E$.
\end{enumerate}
\end{theorem}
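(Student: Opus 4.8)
\emph{Proof proposal.}
The plan is to attach to each candidate pair $(C,E)$ a positive-definite quaternary quadratic form, to show that condition~(3) is equivalent to that form representing every integer $n\ge 2$, and then to classify the pairs by classifying the forms.

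\emph{Step 1: the quadratic form of a pair.}
If $C$ has a nonconstant map to $E$, then pulling back divisor classes embeds $E$ into $\Jac C$ up to isogeny, so by Poincar\'e reducibility $\Jac C$ is isogenous to $E\times E'$ for some elliptic curve $E'$. Put $M=\Hom(E,\Jac C)$; it is a torsion-free $\BZ$-module, and $M\otimes\BQ=\Hom^{0}(E,\Jac C)$ has dimension $1$, $2$, or $4$ (never $3$, since $\Jac C$ is isogenous to a product of at most two elliptic curves and each contributes $0$, $1$, or $2$). Using the principal polarizations to identify $\hat{\Jac C}$ with $\Jac C$ and $\hat E$ with $E$, every $g\in M$ has a dual $g^{\vee}\in\Hom(\Jac C,E)$, the endomorphism $g^{\vee}\circ g$ of $E$ is fixed by the Rosati involution and hence is a nonnegative integer, and the assignment $g\mapsto g^{\vee}\circ g$ defines a positive-definite integral quadratic form $q$ on $M$. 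A short Albanese-map computation identifies the degree-$n$ nonconstant morphisms $C\to E$, up to an automorphism of $E$, with the vectors $g\in M$ satisfying $q(g)=n$: given $\phi$ one takes $g=\phi^{*}$ and uses $\phi_{*}\phi^{*}=\deg\phi$, while conversely $g^{\vee}$ restricted to the Abel--Jacobi image of $C$ is a morphism to $E$ of degree $q(g)$. Thus $(C,E)$ satisfies condition~(3) if and only if $q$ represents every integer $n\ge 2$; and $q$ never represents~$1$, since a vector $g$ with $q(g)=1$ would split $\Jac C$, together with its principal polarization, as a product of two elliptic curves, which is impossible for the Jacobian of a smooth curve of genus~$2$.

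\emph{Step 2: complex multiplication is forced.}
A positive-definite quadratic form of rank at most $2$ omits infinitely many positive integers, so it cannot represent every $n\ge 2$; since $\operatorname{rank}q\in\{1,2,4\}$, condition~(3) forces $\operatorname{rank}q=4$, which happens precisely when $\Jac C\sim E^{2}$ and $E$ has complex multiplication by an order $\CO$ in an imaginary quadratic field $K$. In that case $M$ is a rank-two $\CO$-module carrying a positive-definite Hermitian form $h$ with $q(g)=h(g,g)$, and conversely $\Jac C$ --- with its principal polarization, hence $C$ by Torelli --- is determined by $E$ together with the Hermitian lattice $(M,h)$. So the pairs we seek correspond to the data: a CM elliptic curve $E$ with $\End E=\CO$, and a positive-definite rank-two Hermitian $\CO$-lattice $(M,h)$ such that the associated polarization is principal and indecomposable and such that the quaternary form $g\mapsto h(g,g)$ represents every integer $n\ge 2$ but not $1$.

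\emph{Step 3: the classification, and the main difficulty.}
It remains to enumerate this data. The near-universality of the quaternary form bounds its determinant, and hence bounds $|\operatorname{disc}K|$ and the conductor of $\CO$, leaving only finitely many fields $K$ and orders $\CO$; for each, there are only finitely many rank-two Hermitian $\CO$-lattices up to isometry. One then computes each candidate form, decides whether it represents every $n\ge 2$ --- invoking the classification of universal and almost-universal quaternary quadratic forms (Willerding; Bhargava--Hanke and the ``$290$''-theorem circle of ideas), or running an explicit escalation argument --- and checks that the corresponding polarization is indecomposable, so that $\Jac C$ really is the Jacobian of a smooth genus-$2$ curve. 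This enumeration --- together with establishing the determinant bound cleanly and with the care needed to count isomorphism classes of \emph{pairs} $(C,E)$ rather than of forms --- is the technical heart of the argument, and it is here that the count twenty emerges; Steps~1 and~2 are essentially formal.
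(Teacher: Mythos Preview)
Your Steps~1 and~2 are correct and match the paper's underlying framework: the paper also identifies the degree map with a positive-definite quadratic form on a rank-$4$ $\BZ$-module (phrased there via the decomposition $\omega\mapsto(\varphi_*\omega^*,\chi_*\omega^*)$ coming from the degree-$2$ structure), and the rank-$4$ requirement forcing CM is implicit in the paper's finite list of discriminants.

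Where your proposal diverges is Step~3, and here there is both a genuine difference in strategy and a genuine gap. The paper does \emph{not} bound the determinant of the form abstractly. Instead it exploits the degree-$2$ map to produce a second elliptic curve $F$ and an isomorphism $\psi\colon E[2]\to F[2]$, then uses the existence of maps of degrees~$3$ and~$4$ together with the compatibility condition $\beta(P)=\psi(\alpha(P))$ on $2$-torsion to pin down a short explicit list of possible pairs $(\Delta_E,p)$ with $p\in\{1,2,3,5\}$ the degree of a cyclic isogeny $E\to F$ (Proposition~\ref{P:Deltap}). A computer search over this list, using modular polynomials and the $(\alpha,\beta)$ decomposition, then yields the twenty pairs and the four forms $q_1,\ldots,q_4$; only \emph{afterwards} does the paper verify, by hand via classical ternary-form results of Dickson and Legendre, that each $q_i$ represents every $n>1$.

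Your Step~3, by contrast, asserts that near-universality bounds the determinant and then appeals to Willerding and the $290$-theorem. Neither citation does what you need: Willerding classifies \emph{universal} classical quaternary forms (those representing every $n\ge 1$), and the $290$-theorem gives a finite check for universality, but there is no off-the-shelf classification of forms representing exactly $\{n:n\ge 2\}$. A determinant bound is plausible (via successive-minima arguments using that the form represents $2,3,5,\ldots$), but you have not supplied it, and it is not routine. Moreover, your reconstruction of $(C,E)$ from the Hermitian $\CO$-lattice glosses over a real subtlety: in several of the twenty examples $\End E$ is non-maximal and the complementary curve $F$ has a \emph{different} endomorphism ring (e.g.\ $\Delta_E=-12$, $\Delta_F=-3$), so the equivalence between polarized abelian varieties isogenous to $E^2$ and Hermitian $\CO$-modules requires care. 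Finally, note from Table~\ref{table:data} that the \emph{same} $\BZ$-form $q_i$ arises from several different orders $\CO$; the count of twenty is therefore not visible at the level of quadratic forms alone, and your parenthetical acknowledgment of this does not substitute for the actual bookkeeping.
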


Suppose $(C,E)$ is one of these twenty pairs. If we choose a base point $P$ 
on~$C$, then the set of maps from $C$ to $E$ that take $P$ to the origin of $E$
correspond by duality to embeddings of $E$ into $\Jac C$. These embeddings (plus
the zero map) form a $\BZ$-lattice in the $\BQ$-vector space 
$\Hom(E,\Jac C)\otimes\BQ$, whose dimension is at most twice the rank of
$\End E$. In our examples, we obtain $\BZ$-modules of rank~$4$, and the degree 
function is a quadratic form on each such module. The twenty pairs give rise to
only four different quadratic forms on $\BZ^4$, up to isomorphism. These 
quadratic forms are
\begin{align*}
q_1 &\colonequals 2w^2 + 3x^2 + 3y^2 + 4z^2 + 2xy                   \\
q_2 &\colonequals 2w^2 + 2x^2 + 3y^2 + 3z^2 + 2wz + 2xy             \\
q_3 &\colonequals 2w^2 + 3x^2 + 3y^2 + 4z^2 + 2wx + 2wy + 2xz + 2yz \\
q_4 &\colonequals 2w^2 + 3x^2 + 4y^2 + 6z^2 - 2wx + 2wz + 2xy + 4yz,
\end{align*}
and so in the course of proving Theorem~\ref{T:main} we will need the following
result.

\begin{proposition}
\label{P:forms}
Each of the quaternary quadratic forms $q_1$, $q_2$, $q_3$, $q_4$ represents 
every integer greater than $1$.
\end{proposition}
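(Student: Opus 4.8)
The plan is to treat each of the four forms by the classical local-to-global method for positive-definite quaternary forms. Before starting, three observations: the minimum nonzero value of each $q_i$ is $2$ (attained at $w=1$ with the other coordinates zero), so the exclusion of $n=1$ is not a local phenomenon but simply reflects this minimum; each $q_i$ is integer-matrix (every cross-coefficient is even), with coefficient-matrix determinant $64$, $25$, $36$, $81$ respectively; and since only the primes $2$, $3$, $5$ divide these determinants, all local work concerns just those primes. (Each determinant is moreover a perfect square, so each $q_i$ is $\BQ$-equivalent to the norm form of a definite quaternion algebra of small discriminant --- $q_1$, for instance, is $\BQ$-equivalent to $\langle 1,2,3,6\rangle$, the norm form of the Hamilton quaternions --- which is unsurprising given the geometric source of these forms and reassures us that they are ``nice'', but the local verifications below are best carried out directly.)

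First I would check that there is no local obstruction to representing any integer $n\ge 1$. For $p\notin\{2,3,5\}$ the form $q_i$ is unimodular over $\BZ_p$, hence isotropic and representing all of $\BZ_p$. For each $p\in\{2,3,5\}$ I would write down the Jordan decomposition of $q_i\otimes\BZ_p$ and verify case by case that $q_i$ represents every $p$-adic integer; in particular one must confirm that no obstruction by a high power of $p$ occurs at a prime where $q_i$ is $\BQ_p$-anisotropic (such as $p=2$ for $q_1$). The conclusion of this step is that every $n\ge 1$ is represented by $q_i$ over $\BR$ and over every $\BZ_p$, and hence by the genus of $q_i$.

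Second I would descend from the genus to the form itself. An integer represented by a quaternary genus need not be represented by a prescribed class in it, but it is if it is large enough, and effectively so: write the theta series $\theta_{q_i}(\tau)=\sum_{n\ge 0}r_{q_i}(n)\,e^{2\pi i n\tau}$ --- a modular form of weight $2$ and level dividing $4\det q_i$ --- as an Eisenstein series plus a cusp form. By Siegel's formula the $n$-th Eisenstein coefficient is a product of local densities times a factor of order $n^{1-\varepsilon}$, hence is $\gg n^{1-\varepsilon}$ once those densities are positive, which by the first step holds for all $n\ge 1$; the cusp-form coefficients are $O_\varepsilon(n^{1/2+\varepsilon})$ by Deligne's bound. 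Thus $r_{q_i}(n)>0$ for all $n$ beyond an explicitly computable bound $B_i$. Finally, for $2\le n\le B:=\max_i B_i$ one checks by a finite search --- over the box $c(w^2+x^2+y^2+z^2)\le B$ with $c$ the least eigenvalue of $q_i$ --- that $q_i$ represents $n$.

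The real difficulty is bookkeeping rather than ideas: obtaining a usable value of $B$ requires either identifying the (small) spaces of weight-$2$ forms at the relevant levels and bounding the cusp contribution explicitly, or tracking constants through an off-the-shelf effective Tartakowsky theorem, after which the finite verification must actually be carried out for all four forms. One can shrink $B$ by passing from the genus to the spinor genus, which confines any remaining exceptions to a single family of the shape $n=t^2 m$ with $m$ in a short list of square classes. A cleaner alternative, if the literature provides it, would be an analogue for integer-matrix forms of minimum $2$ of the Conway--Schneeberger and Bhargava--Hanke finite-criterion theorems (escalating from $\langle 2\rangle$ rather than $\langle 1\rangle$), which would reduce Proposition~\ref{P:forms} to checking that each $q_i$ represents the finitely many integers in the resulting critical set.
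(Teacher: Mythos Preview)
Your strategy --- local solvability, theta-series decomposition into Eisenstein plus cusp part, Deligne's bound, then a finite check --- is the standard analytic route and is sound in principle, but what you have written is a plan rather than a proof. You have not computed the Jordan splittings at $2,3,5$, not verified that the local densities are bounded away from zero for all $n\ge 1$, not produced a numerical bound $B$, and not carried out the finite search; you acknowledge as much (``the real difficulty is bookkeeping''), and the alternatives you float at the end (spinor-genus refinements, a hypothetical minimum-$2$ analogue of the $15$/$290$ theorems) are suggestions rather than arguments. To complete the proof along your lines you would have to either identify the relevant weight-$2$ cusp spaces (the levels divide $4\cdot 81$, so this is feasible) and bound their coefficients explicitly, or cite an effective Tartakowsky-type theorem with all constants, and then actually run the search up to the resulting~$B$.

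The paper's proof is completely different and entirely elementary: no modular forms, no local densities, no Deligne. After the easy reduction to $4\nmid n$, it treats each $q_i$ separately by an explicit $\BQ$-linear change of variables that identifies $q_i$ with a classical ternary form plus one square term, and then invokes a known ternary representation theorem --- Dickson on $a^2+2b^2+2c^2$, on $a^2+b^2+5c^2$, and on $a^2+2(b^2+bc+c^2)$ for $q_1$, $q_2$, $q_3$ respectively, and Legendre's three-square theorem for $q_4$. A short case split modulo $8$ (and modulo $3$ for $q_2$ and $q_4$) selects the value of the extra square so that the ternary theorem applies and the substitution lands in~$\BZ^4$. Your approach is uniform and would handle any quaternary form passing the local tests; the paper's is four ad hoc tricks, but each is short, constructive, and requires nothing beyond what fits on the page.
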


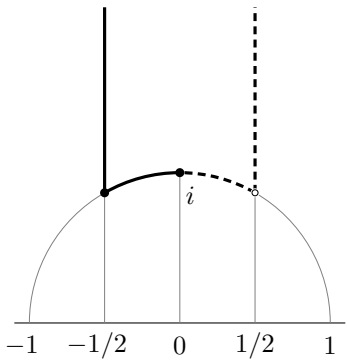
\begin{figure}[t]
\begin{tikzpicture}[scale=2]
\draw                             (-1.1   ,0     ) --   ( 1.1,0     );
\draw[very thin, gray]            (0      ,0     ) --   ( 0,1.0     );
\draw[very thick]                 (-0.5   ,0.8660) --   (-0.5,2.1   );
\draw[very thin, gray]            (-0.5   ,0     ) --   (-0.5,0.8660);
\draw[very thick, densely dashed] ( 0.5   ,0.8660) --   ( 0.5,2.1   );
\draw[very thin, gray]            ( 0.5   ,0     ) --   ( 0.5,0.8660);
\draw[very thick, densely dashed] ( 0.5   ,0.8660) arc ( 60: 90:1    cm);
\draw[very thick]                 ( 0     ,1     ) arc ( 90:120:1    cm);
\draw[very thin, gray]            ( 1     ,0     ) arc (  0: 60:1    cm);
\draw[very thin, gray]            (-1     ,0     ) arc (180:120:1    cm);

\draw (-1.05  ,-0.15) node {$-1$};
\draw (-0.55,-0.15) node {$-1/2$};
\draw (0,-0.15) node {$0$};
\draw (0.5,-0.15) node {$1/2$};
\draw (1  ,-0.15) node {$1$};
\draw (0.07 ,0.85) node {$i$};

\fill[black!100] (-0.5,0.866 ) circle (0.03cm);
\fill[black!100] ( 0  ,1     ) circle (0.03cm);
\fill[white!100] ( 0.5,0.866 ) circle (0.03cm);
\draw ( 0.5,0.866 ) circle (0.02cm);

\end{tikzpicture}
\caption{A strict fundamental domain $\CF_1$ for $\Gamma(1)$}
\label{F:X1}
\end{figure}

Let $\CF_1$ be the strict fundamental domain for $\Gamma(1)$ depicted in
Figure~\ref{F:X1}. Suppose $(C,E)$ is one of the twenty pairs from 
Theorem~\ref{T:main}, and let $\tau$ be the element of $\CF_1$ that corresponds
to~$E$.  We will show that $E$ has complex multiplication, so that $\tau$ is an
element of an imaginary quadratic field. We will also show that the curve $C$
has a period matrix of the form
\[
\begin{pmatrix}
1 & 0 & \tau/2 &      1/2 \\
0 & 1 &    1/2 & \sigma/2 
\end{pmatrix}
\]
where $\tau$ is as above and where $\sigma$ lies in the strict fundamental
domain $\CF_2$ for $\Gamma(2)$ depicted in Figure~\ref{F:X2}. (That there is a 
period matrix of this form, for any genus-$2$ curve with a map of degree $2$ to
an elliptic curve, is essentially a result of 
Picard~\cite{Picard1883, Picard1884}.) Table~\ref{table:data} gives the value of
$\tau$ and $\sigma$ for the each of the twenty pairs, along with the 
discriminants $\Delta_E$ and $\Delta_F$ of the endomorphism rings of $E$ 
and~$F$, and the quadratic form associated to the pair $(C,E)$. Pairs $(C,E)$ 
that have the same values of $\Delta_E$ and $\Delta_F$ can be obtained from one
another by Galois conjugation.

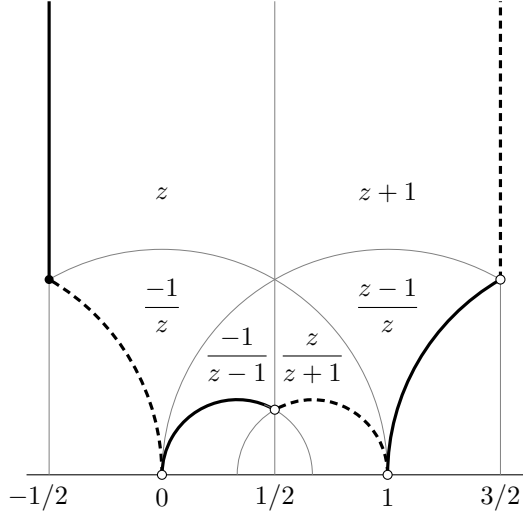
\begin{figure}[t]
\begin{tikzpicture}[scale=3]
\draw                             (-0.6   ,0     ) --   ( 1.6,0     );
\draw[very thick]                 (-0.5   ,0.8660) --   (-0.5,2.1   );
\draw[very thin, gray]            (-0.5   ,0     ) --   (-0.5,0.8660);
\draw[very thin, gray]            ( 0.5   ,0     ) --   ( 0.5,2.1   );
\draw[very thick, densely dashed] ( 1.5   ,0.8660) --   ( 1.5,2.1   );
\draw[very thin, gray]            ( 1.5   ,0     ) --   ( 1.5,0.8660);
\draw[very thick, densely dashed] ( 0     ,0     ) arc (  0: 60:1    cm);
\draw[very thin, gray]            ( 1     ,0     ) arc (  0:120:1    cm);
\draw[very thin, gray]            ( 1.5   ,0.8660) arc ( 60:180:1    cm);
\draw[very thick]                 ( 1.5   ,0.8660) arc (120:180:1    cm);
\draw[very thick]                 ( 0.5   ,0.2887) arc ( 60:180:0.333cm);
\draw[very thin, gray]            ( 0.6667,0     ) arc (  0: 69:0.333cm);
\draw[very thick, densely dashed] ( 1     ,0     ) arc (  0:120:0.333cm);
\draw[very thin, gray]            ( 0.5   ,0.2887) arc (120:180:0.333cm);

\draw (-0.55,-0.1) node {$-1/2$};
\draw (0,-0.1) node {$0$};
\draw (0.5,-0.1) node {$1/2$};
\draw (1,-0.1) node {$1$};
\draw (1.5,-0.1) node {$3/2$};

\fill[black!100] (-0.5,0.866 ) circle (0.02cm);
\fill[white!100] ( 0.5,0.2887) circle (0.02cm);
\draw ( 0.5,0.2887) circle (0.02cm);
\fill[white!100] ( 1.5,0.866 ) circle (0.02cm);
\draw ( 1.5,0.866 ) circle (0.02cm);
\fill[white!100] ( 0  ,0     ) circle (0.02cm);
\draw ( 0  ,0     ) circle (0.02cm);
\fill[white!100] ( 1  ,0     ) circle (0.02cm);
\draw ( 1  ,0     ) circle (0.02cm);

\draw (0.0,1.25) node {$z$};
\draw (0.0,0.75) node {$\displaystyle\frac{-1}{z}$};
\draw (0.333,0.53) node {$\displaystyle\frac{-1}{z-1}$};
\draw (0.666,0.53) node {$\displaystyle\frac{z\llap{\phantom{1}}}{z+1}$};
\draw (1.0,0.75) node {$\displaystyle\frac{z-1}{z}$};
\draw (1.0, 1.25) node {$z+1$};

\end{tikzpicture}
\caption{A strict fundamental domain $\CF_2$ for $\Gamma(2)$, whose closure is
tiled with images of the closure of the strict fundamental domain~$\CF_1$. The
tiles are labeled by the M\"obius transformation that takes $\CF_1$ to the given
tile. Note that $(3+\sqrt{-3})/2$ and $(3+\sqrt{-3})/6$ are not included in
$\CF_2$.}
\label{F:X2}
\end{figure}

\begin{table}[t]
\renewcommand\arraystretch{1.2}
\begin{center}
\begin{tabular}{r@{\qquad}rrccc}
\toprule
No. & $\Delta_E$ & $\Delta_F$ &  $\tau$              & $\sigma$               & Form  \\ 
\midrule
 1. & $-4$       & $-100$     & $       \sqrt{-1}   $ & $      5\sqrt{-1}    $ & $q_2$ \\ 
 2. &            &            &                       & $(12 + 5\sqrt{-1})/13$ &       \\
 3. & $-8$       & $-32$      & $       \sqrt{-2}   $ & $       \sqrt{-2} / 4$ & $q_1$ \\ 
 4. &            &            &                       & $( 4 +  \sqrt{-2})/ 4$ &       \\ 
 5. &            &            &                       & $( 1 +  \sqrt{-2})/ 2$ &       \\ 
 6. &            &            &                       & $( 2 +  \sqrt{-2})/ 4$ &       \\
 7. &            & $-72$      &                       & $( 6 +  \sqrt{-2})/ 6$ & $q_3$ \\ 
 8. &            &            &                       & $( 2 + 3\sqrt{-2})/ 2$ &       \\
 9. & $-12$      & $-3$       & $       \sqrt{-3}   $ & $(-1 +  \sqrt{-3})/ 2$ & $q_3$ \\ 
10. &            &            &                       & $( 1 +  \sqrt{-3})/ 2$ &       \\
11. & $-16$      & $-4$       & $      2\sqrt{-1}   $ & $       \sqrt{-1}    $ & $q_1$ \\ 
12. &            &            &                       & $  1 +  \sqrt{-1}    $ &       \\
13. & $-20$      & $-20$      & $       \sqrt{-5}   $ & $       \sqrt{-5}    $ & $q_2$ \\
14. &            &            & $(-1 +  \sqrt{-5})/2$ & $( 3 +  \sqrt{-5})/ 7$ &       \\
15. & $-24$      & $-24$      & $       \sqrt{-6}   $ & $( 2 +  \sqrt{-6})/ 2$ & $q_3$ \\
16. &            &            & $       \sqrt{-6} /2$ & $( 6 +  \sqrt{-6})/ 7$ &       \\
17. & $-36$      & $-36$      & $      3\sqrt{-1}   $ & $( 6 + 3\sqrt{-1})/ 5$ & $q_4$ \\ 
18. &            &            &                       & $( 4 + 3\sqrt{-1})/ 5$ &       \\  
19. &            &            & $(-1 + 3\sqrt{-1})/2$ & $  1 + 3\sqrt{-1}    $ &       \\ 
20. &            &            &                       & $( 3 +  \sqrt{-1})/ 3$ &       \\ 
\bottomrule
\end{tabular}
\medskip
\caption{Data for the twenty pairs $(C,E)$ from Theorem~\ref{T:main}}
\label{table:data}
\end{center}
\end{table}

A map $\varphi$ from a curve $C$ to an elliptic curve $E$ is said to be
\emph{minimal} if it does not factor through an isogeny $F\to E$ of degree
greater than~$1$. We note that for our pairs $(C,E)$, for some values of $n$
there are no minimal maps $C\to E$ of degree~$n$. This follows from two more
general results that we prove in Section~\ref{S:intersection} by using work of 
Kani.

\begin{theorem}
\label{T:intersection1}
Let $C$ be a curve of genus~$2$ over $\BC$. Then for some $n$ in the set
$\{2,3,4,5,6,7,8,9,11,12,13,19,31,59\}$, there does not exist an elliptic curve
$E$ for which there exists a minimal map of degree $n$ from $C$ to~$E$.
\end{theorem}

\begin{theorem}
\label{T:intersection2}
Let $k$ be a positive integer and let 
%
\begin{align*}
N &= 2^6\cdot 3^3\cdot 5^2\cdot 7\cdot 11\cdot 17\cdot 19\cdot 23\cdot 29\cdot 31\cdot 37\cdot 43\cdot 47\cdot 53\cdot 59\cdot 61\cdot 67\cdot 71\cdot\rlap{\phantom{0}} \\
& \qquad 73\cdot 79\cdot 83\cdot 97\cdot 103\cdot 107\cdot 109\cdot 113\cdot 127\cdot 131\cdot 139\cdot 151\cdot 157\cdot 163\cdot\rlap{\phantom{0}}  \\
& \qquad 167\cdot 173\cdot 179\cdot 181\cdot 191\cdot 197\cdot 199\cdot 211\cdot 223\cdot 227\cdot 239\cdot 263\cdot 271\cdot\rlap{\phantom{0}}  \\
& \qquad 277\cdot 283\cdot 293\cdot 311\cdot 359\cdot 383\cdot 431\cdot 439\cdot 479\cdot 503\cdot 599\cdot 631\cdot 719\cdot\rlap{\phantom{0}} \\
& \qquad 727\cdot 743\cdot 751\cdot 823\cdot 839\cdot 863\cdot 887\cdot 911\cdot 983\cdot 991\cdot 1031\cdot 1039\cdot 1063\cdot\rlap{\phantom{0}}  \\
& \qquad 1103\cdot 1151\cdot 1223\cdot 1231\cdot 1303\cdot 1319\cdot 1327\cdot 1439\cdot 1487\cdot 1511\,.
\end{align*}
If $C$ is a curve of genus~$2$ over $\BC$, then for some $n$ in 
$\{2, 3, 13, kN\}$ there does not exist an elliptic curve $E$ for which there
exists a minimal map of degree $n$ from $C$ to~$E$.
\end{theorem}

The structure of this paper follows that of the proof of Theorem~\ref{T:main}.  
In Section~\ref{S:degree2} we recall some facts about genus-$2$ curves with
degree-$2$ maps to elliptic curves. In particular, the following proposition is
fundamental to our proof.

\begin{proposition}
\label{P:structure}
Suppose $C$ is a genus-$2$ curve with a degree-$2$ map $\varphi$ to an elliptic
curve $E$. Then there is a unique elliptic curve~$F$, a degree-$2$ map 
$\chi\colon C\to F$, and an isomorphism $\psi\colon E[2]\to F[2]$ such that the
kernel of $\varphi^*\times \chi^*\colon E\times F\to\Jac C$ is the graph of
$\psi$ and such that the following diagram commutes\textup:
\begin{equation}
\label{D:structure}
\vcenter{
\xymatrix{
E\times F \ar[rr]^{\left[\begin{smallmatrix}
\scriptstyle 1&\scriptstyle 0\\ \scriptstyle 0&\scriptstyle 1
\end{smallmatrix}\right]} &&
E\times F \ar[d]^{\varphi^*\times \chi^*} \\
\Jac C\ar[u]^{\varphi_*\times \chi_*}\ar[rr]^{2}&&\Jac C\rlap{\,.}
}}
\end{equation}
The pair $(\chi,\psi)$ is unique up to composition with automorphisms of~$F$.
Conversely, given two elliptic curves $E$ and $F$ and an isomorphism
$\psi\colon E[2]\to F[2]$, there is a genus-$2$ curve $C$ and a degree-$2$ map
$C\to E$ that gives rise to $F$ and $\psi$ as above, unless $\psi$ is the
restriction to $E[2]$ of an isomorphism $E\to F$, in which case there is no such
curve~$C$.
\end{proposition}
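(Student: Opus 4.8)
The plan is to translate the statement into one about the principally polarized abelian surface $\Jac C$ and then invoke the theory of complementary abelian subvarieties. First I would note that, since $\deg\varphi = 2$ is prime and $C$ has genus $2$, the pullback $\varphi^*\colon E\to\Jac C$ is injective: a nontrivial factorization of $\varphi$ through an isogeny would make a genus-$2$ curve isogenous to an elliptic curve. Write $\bar E\subseteq\Jac C$ for the image of $\varphi^*$ and let $F\subseteq\Jac C$ be the complementary abelian subvariety of $\bar E$ with respect to the canonical principal polarization $\lambda$; it is an elliptic curve. Under the principal polarization of $E$, the polarization that $\lambda$ induces on $\bar E$ corresponds to the endomorphism $\varphi_*\varphi^* = [\deg\varphi] = [2]$ of $E$, and the polarization induced on a complementary subvariety of a principally polarized abelian surface has the same exponent, so the polarization $\lambda$ induces on $F$ also has exponent $2$. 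Hence the addition map $\bar E\times F\to\Jac C$ is an isogeny of degree $(\deg\varphi)^2 = 4$ whose kernel $K$ is killed by $[2]$; because $K\cap(\bar E\times 0) = 0$ and $K\cap(0\times F) = 0$ (by injectivity of $\varphi^*$ and of the inclusion of $F$), the group $K$ maps isomorphically onto $\bar E[2]$ and onto $F[2]$, so — transported along $\varphi^*$ — it is the graph $\Gamma_\psi$ of an isomorphism $\psi\colon E[2]\to F[2]$.

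Next I would produce the map $\chi$. Let $h\colon\Jac C\to F$ be the homomorphism dual, via the principal polarizations of $\Jac C$ and of $F$, to the inclusion $\iota\colon F\hookrightarrow\Jac C$, and let $\chi\colon C\to F$ be the composition of an Abel--Jacobi embedding $C\to\Jac C$ with $h$. A short functoriality check gives $\chi_* = h$ and $\chi^* = \iota$, so $\chi_*\chi^* = h\iota$ is the polarization induced on $F$, namely $[2]$; since $\chi_*\chi^* = [\deg\chi]$ as well, this forces $\deg\chi = 2$. Commutativity of diagram~\eqref{D:structure} is the identity $\varphi^*\varphi_* + \chi^*\chi_* = [2]$ on $\Jac C$, which is the standard relation between the norm endomorphisms of the complementary pair $(\bar E, F)$: each of $\varphi^*\varphi_*$ and $\chi^*\chi_*$ squares to twice itself, and the norm endomorphisms of complementary subvarieties whose induced polarizations have exponent $2$ sum to $[2]$. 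For the uniqueness clause: the graph condition forces $\bar E\times F'\to\Jac C$ to be an isogeny for any competing $F'$, so $F'$ is the (unique) complement of $\bar E$; any injective homomorphism $F\to\Jac C$ with image $F$ differs from $\iota$ by an automorphism of $F$, and a morphism $C\to F$ is determined up to a translation by its pullback, so $\chi$ — and with it $\psi$ — is determined up to composition with an automorphism of $F$, and conversely every such composition preserves all of the stated properties.

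For the converse, given $E$, $F$, and an isomorphism $\psi\colon E[2]\to F[2]$, I would set $A = (E\times F)/\Gamma_\psi$, where $\Gamma_\psi$ is the graph of $\psi$. Because over $\mathbf{F}_2$ the space of alternating bilinear forms on a two-dimensional vector space is one-dimensional — so there is just one nondegenerate such form — $\psi$ automatically carries the Weil pairing on $E[2]$ to that on $F[2]$; hence $\Gamma_\psi$ is isotropic for the form attached to the polarization $2(\lambda_E\times\lambda_F)$ of $E\times F$, and that polarization descends to a principal polarization $\lambda_A$ on $A$. A principally polarized abelian surface over $\BC$ is either the Jacobian of a smooth genus-$2$ curve $C$ with its canonical polarization, or a product of two principally polarized elliptic curves. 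In the first case, applying the forward direction to $C$, with the image of $E$ in $A$ playing the role of $\bar E$, produces a degree-$2$ map $\varphi\colon C\to E$ realizing the prescribed pair $(F,\psi)$. In the second case $A$ is not a Jacobian, and since the forward direction shows any genus-$2$ curve realizing $(F,\psi)$ would have Jacobian isomorphic to $A$, no such curve exists. It then remains to identify when the second case occurs: if $\psi = h|_{E[2]}$ for an isomorphism $h\colon E\to F$, then $\mathrm{id}\times h^{-1}$ identifies $A$ with $(E\times E)/\Delta(E[2])$, which $(u,v)\mapsto(u-v,u+v)$ carries isomorphically onto $E\times E$, and a direct computation shows the descended polarization is the product principal polarization, so $A$ is a product; conversely, a splitting of $(A,\lambda_A)$ pulls back through the dual isogeny $A\to E\times F$ to a complementary pair of elliptic curves in $E\times F$, and since $\Gamma_\psi$ is never a product subgroup of $E[2]\times F[2]$ these curves cannot be $E\times 0$ and $0\times F$, so an analysis of how elliptic curves can sit inside $E\times F$ — delicate when $E$ and $F$ are isogenous — forces them to be graphs of mutually inverse isogenies between $E$ and $F$ whose restrictions to $2$-torsion recover $\psi$, so $\psi$ extends. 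I expect this last step — matching the locus where $(A,\lambda_A)$ decomposes to the isomorphisms $E\to F$ — to be the main obstacle, the forward direction being routine complementary-subvariety bookkeeping.
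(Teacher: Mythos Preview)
Your argument is essentially correct and is a genuinely different route from the paper's. The paper's proof is a two-line citation: it invokes \cite[Theorem~1.5]{Kani1997} in the special case $N=2$, reads off $F$, $\chi$, $\psi$ and the diagram from Kani's statement, and defers both uniqueness and the converse to that reference. You, by contrast, rebuild the $N=2$ case of Kani's theorem from scratch using complementary abelian subvarieties: you identify $F$ as the complement of $\varphi^*(E)$ inside $(\Jac C,\lambda)$, manufacture $\chi$ by dualizing the inclusion, and obtain diagram~\eqref{D:structure} from the norm-endomorphism identity for a complementary pair of exponent~$2$. Your observation that over $\mathbf{F}_2$ every isomorphism of two-dimensional symplectic spaces is automatically an (anti-)isometry is exactly why the $N=2$ case is cleaner than Kani's general statement.

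What each approach buys: the paper's is short and leans on a sharp reference; yours is self-contained and makes visible why the exponent-$2$ structure forces $\deg\chi=2$ and why $\ker(\varphi^*\times\chi^*)$ is a graph. The price you pay is the step you yourself flag as the obstacle: showing that $(A,\lambda_A)=((E\times F)/\Gamma_\psi,\lambda_A)$ is a product of principally polarized elliptic curves \emph{only} when $\psi$ extends to an isomorphism $E\to F$. Your sketch there (pull the splitting back to $E\times F$, argue the factors must be graphs of mutually inverse isogenies, then force degree~$1$) is the right shape, but the ``force degree~$1$'' part is exactly the nontrivial content that Kani's paper supplies; when $E$ and $F$ are isogenous, $E\times F$ contains many elliptic curves that are graphs of higher-degree isogenies, and you need a polarization-degree count to exclude them. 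So your proposal is sound, with the honest caveat that completing that last paragraph amounts to reproving the hard half of the cited theorem.
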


\begin{corollary}
\label{C:alphabeta}
Let notation be as in Proposition~\textup{\ref{P:structure}}, and let $\omega$ 
be a nonconstant map from $C$ to $E$. Let $\alpha$ be the endomorphism 
$\varphi_*\omega^*\colon E\to E$ and let $\beta$ be the morphism 
$\chi_*\omega^*\colon E\to F$. Then $\deg\omega = (\deg\alpha+\deg\beta)/2$, and for every 
$P\in E[2]$ we have $\beta(P) = \psi(\alpha(P))$.
\end{corollary}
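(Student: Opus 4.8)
The plan is to deduce both assertions formally from two facts recorded in Proposition~\ref{P:structure}: the commutativity of diagram~\eqref{D:structure}, and the identification of $\ker(\varphi^*\times\chi^*)$ with the graph of $\psi$. The first step is to translate the commutativity of~\eqref{D:structure} into the single identity
\[
\varphi^*\varphi_* + \chi^*\chi_* = [2] \qquad\text{on } \Jac C,
\]
obtained by composing $\varphi_*\times\chi_*$ with $\varphi^*\times\chi^*$ through the identity map along the top edge: this composite sends $x\in\Jac C$ to $\varphi^*\varphi_* x + \chi^*\chi_* x$, while the diagram forces it to equal $2x$.

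For the torsion statement, fix $P\in E[2]$. Since $\omega^*\colon E\to\Jac C$ is a homomorphism of abelian varieties, $\omega^*P\in(\Jac C)[2]$, so applying the displayed identity to $\omega^*P$ gives $\varphi^*(\alpha P)+\chi^*(\beta P)=2\,\omega^*P=0$. Hence $(\alpha P,\beta P)$ lies in $\ker(\varphi^*\times\chi^*)$, which is the graph of $\psi$; reading off the two coordinates yields $\alpha P\in E[2]$ (as is automatic, $\alpha$ being a homomorphism) and $\beta P=\psi(\alpha P)$.

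For the degree formula I would use the standard compatibility that, under the canonical principal polarizations on the Jacobians involved, the pullback and pushforward homomorphisms attached to a finite map of curves are dual to each other, so that $\widehat{\omega^*}=\omega_*$, $\widehat{\varphi_*}=\varphi^*$, and $\widehat{\chi_*}=\chi^*$; together with $\omega_*\omega^*=[\deg\omega]$ on $E$. Then $\widehat\alpha=\widehat{\varphi_*\omega^*}=\widehat{\omega^*}\,\widehat{\varphi_*}=\omega_*\varphi^*$, and since $\alpha$ is an isogeny between elliptic curves, $[\deg\alpha]=\widehat\alpha\alpha=\omega_*\varphi^*\varphi_*\omega^*$; likewise $[\deg\beta]=\omega_*\chi^*\chi_*\omega^*$, both sides being $0$ when $\beta=0$ under the convention $\deg 0=0$. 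Adding these and using the first step,
\[
[\deg\alpha+\deg\beta]=\omega_*\bigl(\varphi^*\varphi_*+\chi^*\chi_*\bigr)\omega^*=\omega_*\,[2]\,\omega^*=[2]\,\omega_*\omega^*=[2\deg\omega]
\]
as endomorphisms of $E$; since $[m]=[n]$ forces $m=n$ on an elliptic curve, this gives $\deg\omega=(\deg\alpha+\deg\beta)/2$.

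The argument is a formal computation, so I do not anticipate a genuine obstacle; the only step calling for care is the polarization bookkeeping. One must apply $\widehat{f^*}=f_*$ with the correct self-duality identifications, and one must invoke the clean identity $\widehat g\,g=[\deg g]$ only where it genuinely holds --- namely for isogenies between elliptic curves, where it is the defining property of the dual isogeny --- and not carelessly for higher-dimensional abelian varieties, where $\widehat g\,g$ need not even be a multiplication map. It is also worth flagging the degenerate case $\beta=0$ and the convention $\deg 0=0$ explicitly, so that the displayed chain of equalities is valid in every case.
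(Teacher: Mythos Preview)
Your proof is correct and follows essentially the same approach as the paper. The paper packages the argument as an extended commutative diagram obtained by flanking~\eqref{D:structure} with $\omega^*$ on the left and $\omega_*$ on the right, then reads off $\widehat\alpha\alpha+\widehat\beta\beta=[2\deg\omega]$ along the top and bottom; you instead extract the identity $\varphi^*\varphi_*+\chi^*\chi_*=[2]$ from~\eqref{D:structure} and sandwich it between $\omega_*$ and $\omega^*$, which is exactly the same computation written out algebraically rather than diagrammatically.
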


There is a converse statement, as well.

\begin{corollary}
\label{C:alphabeta2}
Let notation be as in Proposition~\textup{\ref{P:structure}}, and suppose 
$\alpha\colon E\to E$ and $\beta\colon E\to F$ are morphisms such that for every
$P\in E[2]$, we have $\beta(P) = \psi(\alpha(P))$. Then there is a nonconstant
map $\omega\colon C\to E$ with $\deg\omega = (\deg\alpha+\deg\beta)/2$ such that
$\alpha=\varphi_*\omega^*$ and $\beta = \chi_*\omega^*$.
\end{corollary}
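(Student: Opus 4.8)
The plan is to construct $\omega$ by first writing down its pullback $\omega^{*}\colon E\to\Jac C$ as an explicit homomorphism of abelian varieties, and then recovering $\omega$ from the standard bijection between $\Hom(E,\Jac C)$ and morphisms $C\to E$ (up to translation on $E$), under which $\omega$ corresponds to $\omega^{*}$. The natural candidate for $\omega^{*}$ is the homomorphism $f\colon E\to\Jac C$ with $2f=\varphi^{*}\circ\alpha+\chi^{*}\circ\beta$, and the role of the $2$-torsion hypothesis is exactly to guarantee that such an $f$ exists. To see this I would put $g=\varphi^{*}\circ\alpha+\chi^{*}\circ\beta$ and check that $g$ kills $E[2]$: for $P\in E[2]$ the hypothesis gives $\beta(P)=\psi(\alpha(P))$, so $g(P)=\varphi^{*}(\alpha(P))+\chi^{*}(\psi(\alpha(P)))=0$ because the pair $(\alpha(P),\psi(\alpha(P)))$ lies in the kernel of $\varphi^{*}\times\chi^{*}$, which is the graph of $\psi$ by Proposition~\ref{P:structure}. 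Hence $g$ factors through $[2]\colon E\to E$, producing the desired $f$, and I take $\omega$ to be a morphism $C\to E$ with $\omega^{*}=f$.

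Next I would verify the identities $\varphi_{*}\omega^{*}=\alpha$ and $\chi_{*}\omega^{*}=\beta$. These rest on the relations $\varphi_{*}\varphi^{*}=\chi_{*}\chi^{*}=[2]$ and $\varphi_{*}\chi^{*}=\chi_{*}\varphi^{*}=0$, which come out of the commutative diagram~\eqref{D:structure}: since $\varphi^{*}\times\chi^{*}$ has finite kernel, the relation $(\varphi^{*}\times\chi^{*})\circ(\varphi_{*}\times\chi_{*})=[2]$ on $\Jac C$ forces $(\varphi_{*}\times\chi_{*})\circ(\varphi^{*}\times\chi^{*})=[2]$ on $E\times F$ as well (composing with $\varphi^{*}\times\chi^{*}$ shows the difference of the two sides has image in the finite kernel, hence vanishes), and comparing matrix entries yields the four relations. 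Applying $\varphi_{*}$ and $\chi_{*}$ to $2f=\varphi^{*}\alpha+\chi^{*}\beta$ then gives $2\,\varphi_{*}f=2\alpha$ and $2\,\chi_{*}f=2\beta$, so $\varphi_{*}\omega^{*}=\alpha$ and $\chi_{*}\omega^{*}=\beta$ because $\Hom(E,E)$ and $\Hom(E,F)$ are torsion-free. (This is presumably the same package of relations recorded in the proof of Corollary~\ref{C:alphabeta}.)

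For the degree I would use that $\omega_{*}\omega^{*}=[\deg\omega]$ in $\End E$, together with the fact that pushforward and pullback along a map of curves are adjoint with respect to the canonical principal polarizations, so $\omega_{*}=(\omega^{*})^{\dagger}$, $\varphi^{*}=(\varphi_{*})^{\dagger}$, and $\chi^{*}=(\chi_{*})^{\dagger}$; here $\dagger$ on $\End E$ is the Rosati involution (the dual isogeny), with $\alpha^{\dagger}\alpha=[\deg\alpha]$, and the adjoint $\beta^{\dagger}\colon F\to E$ of $\beta$ satisfies $\beta^{\dagger}\beta=[\deg\beta]$. Using these facts and the orthogonality relations above, a short computation gives $g^{\dagger}g=[2]\,\alpha^{\dagger}\alpha+[2]\,\beta^{\dagger}\beta=[\,2\deg\alpha+2\deg\beta\,]$; since $g=2\omega^{*}$ this reads $[4]\,(\omega^{*})^{\dagger}\omega^{*}=[\,2\deg\alpha+2\deg\beta\,]$, whence $\deg\omega=(\deg\alpha+\deg\beta)/2$ (which is therefore an integer). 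In particular $\omega$ is nonconstant precisely when $(\alpha,\beta)\neq(0,0)$, so the statement tacitly requires this.

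The main obstacle, such as it is, is keeping the bookkeeping straight in the degree computation --- tracking the sources and targets of the various adjoints --- and pinning down which identities among $\varphi_{*},\varphi^{*},\chi_{*},\chi^{*}$ have already been established. The one genuinely substantive point, that the compatibility $\beta|_{E[2]}=\psi\circ\alpha|_{E[2]}$ is exactly the obstruction to $\varphi^{*}\alpha+\chi^{*}\beta$ being divisible by $2$ in $\Hom(E,\Jac C)$, is handed to us directly by the kernel description in Proposition~\ref{P:structure}; everything after that is formal.
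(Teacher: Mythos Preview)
Your proof is correct and is essentially the dual of the paper's argument. The paper constructs $\omega_*$ first: it observes that the hypothesis forces $\ker(\varphi^*\times\chi^*)\subseteq\ker(\widehat{\alpha}+\widehat{\beta})$, so $\widehat{\alpha}+\widehat{\beta}$ factors through $\varphi^*\times\chi^*$ to give $\varpi\colon\Jac C\to E$, and then $\omega$ is obtained by composing with an Abel--Jacobi embedding; the identities $\varphi_*\omega^*=\alpha$, $\chi_*\omega^*=\beta$ and the degree formula then drop out of the completed diagram~\eqref{D:fundamental} exactly as in the proof of Corollary~\ref{C:alphabeta}. You instead construct $\omega^*$ first, by showing $\varphi^*\alpha+\chi^*\beta$ kills $E[2]$ and dividing by~$2$.

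The two routes buy slightly different things. Your argument uses only the kernel description in Proposition~\ref{P:structure} and is therefore entirely self-contained; the paper's step ``$\ker(\varphi^*\times\chi^*)\subseteq\ker(\widehat{\alpha}+\widehat{\beta})$'' tacitly relies on the fact that $\psi$ is an anti-isometry for the Weil pairings (so that $e_E(\widehat{\alpha}Q+\widehat{\beta}\psi(Q),P)=e_E(Q,\alpha P)\,e_F(\psi Q,\psi\alpha P)=1$), which is true but not recorded in the statement of Proposition~\ref{P:structure}. On the other hand, the paper avoids your separate derivation of the relations $\varphi_*\varphi^*=\chi_*\chi^*=[2]$, $\varphi_*\chi^*=\chi_*\varphi^*=0$ and the Rosati/degree computation, since those are already packaged in diagram~\eqref{D:fundamental}. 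Your remark that the conclusion tacitly assumes $(\alpha,\beta)\neq(0,0)$ is a fair observation.
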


Suppose $C$ is a genus-$2$ curve that has maps of every degree $n>1$ to an
elliptic curve~$E$. In Section~\ref{S:degrees3and4} we apply 
Corollary~\ref{C:alphabeta} to the degree-$3$ and degree-$4$ maps from $C$ 
to~$E$, and use the resulting information to deduce restrictions on the 
endomorphism ring of $E$ and on the relationship between $E$ and $F$. In 
particular, we prove the following proposition.

\begin{proposition}
\label{P:Deltap}
Suppose $C$ is a genus-$2$ curve that has maps of degree $2$, $3$, and $4$ to
an elliptic curve~$E$. Let $F$ be the curve associated as in 
Proposition~\textup{\ref{P:structure}} to a degree-$2$ map from $C$ to~$E$. Then
for one of the possibilities for $p$ and $\Delta$ listed below, the endomorphism
ring of $E$ has discriminant $\Delta$, and there is a cyclic isogeny from $E$ to
$F$ of degree~$p$.
\begin{enumerate}
\item $p = 1$ and $-\Delta\in\{3$, $4$, $7$, $11$, $12$, $16$, $19$, $20$, $24$, $27$, $28\}.$
\item $p = 2$ and $-\Delta\in\{4$, $7$, $8$, $12$, $15$, $16$, $20$, $23$, $24$, $31$, $36$, $39$, $40\}.$
\item $p = 3$ and $-\Delta\in\{3$, $4$, $8$, $11$, $12$, $16$, $19$, $20\}.$
\item \label{p=5}
      $p = 5$ and $-\Delta\in\{3$, $4$, $7$, $8$, $11$, $12$, $15$, $16$, $19$, $31$, $35$, $40$, $76$, $91$, $104$, $115$, $124$, $131$, $136$, $139$, $140\}$.
\end{enumerate}
\end{proposition}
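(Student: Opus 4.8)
The plan is to apply Corollary~\ref{C:alphabeta} to the degree-$3$ and degree-$4$ maps. Fix a degree-$2$ map $\varphi\colon C\to E$ and let $F$, $\chi$, $\psi$ be the data attached to it by Proposition~\ref{P:structure}. A degree-$3$ map $\omega_3\colon C\to E$ yields an endomorphism $\alpha_3=\varphi_*\omega_3^*$ of $E$ and a morphism $\beta_3=\chi_*\omega_3^*\colon E\to F$ with $\deg\alpha_3+\deg\beta_3=6$ and $\beta_3(P)=\psi(\alpha_3(P))$ for all $P\in E[2]$; a degree-$4$ map $\omega_4$ yields $(\alpha_4,\beta_4)$ with $\deg\alpha_4+\deg\beta_4=8$ and the same compatibility. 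Two consequences of the $2$-torsion compatibility will be used repeatedly: since $\psi$ is an isomorphism, $\ker\alpha_i\cap E[2]=\ker\beta_i\cap E[2]$ for $i=3,4$; and if an endomorphism of $E$, or a morphism $E\to F$, kills $E[2]$, then it factors through $[2]_E$ and so has degree divisible by~$4$. In particular $\alpha_3\neq0$ and $\beta_3\neq0$ (if $\alpha_3=0$ then $\beta_3$ kills $E[2]$ and $4\mid\deg\beta_3=6$; if $\beta_3=0$ then $\alpha_3$ kills $E[2]$ and $4\mid\deg\alpha_3=6$), so $\deg\alpha_3\le5$ and $E$ is isogenous to $F$.

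The first substantive step is to show that $E$ has complex multiplication. If $\End(E)=\BZ$, then $\Hom(E,F)$ is infinite cyclic (as $E\sim F$); let $\lambda$ generate it, with $d=\deg\lambda$. Writing $\alpha_3=[a]$ and $\beta_3=n\lambda$ gives $a^2+n^2d=6$, whose only solutions have $(a^2,n^2d)\in\{(1,5),(4,2)\}$; the case $a=\pm2$ is excluded because $[\pm2]$ kills $E[2]$, which by the intersection identity forces $\deg\beta_3\ge4>2$. So $d=5$ and $\deg\beta_3=5$; but then $(\alpha_4,\beta_4)=([a'],n'\lambda)$ with $a',n'\in\BZ$ would satisfy $a'^2+5n'^2=8$, which is impossible. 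This contradicts the existence of $\omega_4$, so $E$, and hence $F$, has complex multiplication by an order in an imaginary quadratic field; write $\CO=\End(E)$ and let $\Delta<0$ be its discriminant.

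Next I produce the cyclic isogeny and bound $\Delta$. Using $\ker\alpha_3\cap E[2]=\ker\beta_3\cap E[2]$ together with $\deg\alpha_3+\deg\beta_3=6$, one sees that $\ker\beta_3$ is cyclic: it cannot contain $E[2]$ (that would put $E[2]$ in $\ker\alpha_3$ as well, forcing $\deg\alpha_3,\deg\beta_3\ge4$), and it cannot contain $E[\ell]$ for a prime $\ell\ge3$ (that would make $\ell^2\le\deg\beta_3\le5$ impossible); so $\ker\beta_3$ contains no copy of $(\BZ/\ell)^2$ and is cyclic of order $p'=6-\deg\alpha_3\in\{1,\dots,5\}$. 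If $p'\ne4$ we may take $p=p'$. If $p'=4$, then $\deg\alpha_3=2$ and, by the intersection identity, $\ker\alpha_3$ is the unique order-$2$ subgroup of the cyclic group $\ker\beta_3$; thus $\beta_3$ factors as $E\to E/\ker\alpha_3\to F$ with both maps of degree $2$, and since $\alpha_3$ identifies $E/\ker\alpha_3$ with $E$, the second map (viewed as a map $E\to F$) is a cyclic $2$-isogeny, so we may take $p=2$. The same factorization will apply to $\beta_4$ below whenever it kills $E[2]$. In every case, then, some cyclic isogeny $E\to F$ of degree $p\in\{1,2,3,5\}$ is available. For the discriminant: if $\alpha_3$ is not a rational integer, then $\deg\alpha_3$ is at least the least norm of a non-rational element of $\CO$, namely $|\Delta|/4$ or $(|\Delta|+1)/4$ according as $\Delta\equiv0$ or $1\pmod4$, so $-\Delta\le20$, and one runs through the finitely many such orders, recording in each the norms $\le5$ realized by a non-rational element and the associated value of $p$. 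If instead $\alpha_3$ is a rational integer, then $\alpha_3=[\pm1]$ (since $[\pm2]$ is excluded), so $\deg\beta_3=5$ and $p=5$ is available; the bound on $\Delta$ must now come from $\omega_4$, by the same analysis applied to $(\alpha_4,\beta_4)$, and when $\alpha_4$ is a rational integer too --- so $\alpha_4=[\pm1]$ and $\deg\beta_4=7$ --- one forms the non-rational endomorphism $\hat\beta_4\circ\beta_3$ of $E$, of norm $5\cdot7=35$, whence $-\Delta\le140$. In each surviving configuration one checks that $\CO$ actually contains an element of the required norm, that $F$ has a discriminant compatible with being simultaneously a $p$-quotient of $E$ and whatever other quotient the analysis forces, and then records which listed pair $(p,\Delta)$ it realizes --- using, where convenient, a smaller value of $p$ obtained from $\beta_4$ or from the factorization trick. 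Carrying this out yields the four lists.

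The main obstacle is the case $\alpha_3=[\pm1]$: here the degree-$3$ map gives no information about $\Delta$, so every restriction --- in particular the long list in case~\eqref{p=5} --- has to be squeezed out of the degree-$4$ map together with the detailed arithmetic of $\CO$, of the order $\End(F)$ (which may be a proper suborder of $\CO$), and of the $2$-torsion data; pinning down exactly which discriminants survive, and which value of $p$ to report for each, is the delicate part of the argument. A secondary point requiring care throughout is that when $E\cong F$ Proposition~\ref{P:structure} still permits a curve $C$, provided $\psi$ is not the restriction of an isomorphism $E\to F$; one must verify this in the relevant configurations, and it is what places those pairs in case~(1) with $p=1$.
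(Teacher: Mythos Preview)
Your approach matches the paper's: apply Corollary~\ref{C:alphabeta} to the degree-$3$ and degree-$4$ maps, use the kernel identity $\ker\alpha_i\cap E[2]=\ker\beta_i\cap E[2]$ to constrain the possible degree splits, and in the residual case $\alpha_3=\pm1$, $\alpha_4=\pm1$ form the degree-$35$ endomorphism $\hat\beta_4\beta_3$ of~$E$. The paper organizes the case analysis by the value of $\deg\alpha$ rather than by whether $\alpha$ is a rational integer, and invokes a preparatory lemma (Lemma~\ref{L:discs}) that lists, for each small~$n$, exactly which discriminants admit a cyclic endomorphism of degree~$n$; this replaces your coarser bound $-\Delta\le 4\deg\alpha$ and is what produces the specific lists directly. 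Your CM-by-contradiction paragraph is correct but becomes unnecessary once one has such a lemma.

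The genuine gap is the discriminant $-59$. An order of discriminant $-59$ \emph{does} contain a cyclic element of norm~$35$ (it appears in Lemma~\ref{L:discs}\eqref{deg35}), and for such an $E$ the curve $F$ is simultaneously $5$- and $7$-isogenous to $E$ with $\End F$ of discriminant~$-59$ as well; so neither of your checks ``$\CO$ actually contains an element of the required norm'' nor ``$F$ has a discriminant compatible'' excludes it --- yet $-59$ is not on the $p=5$ list. The paper rules it out by the sharper $2$-torsion argument you only gesture at: since $\alpha_3=\pm1$ and $\alpha_4=\pm1$, both $\beta_3$ and $\beta_4$ restrict to $\psi$ on $E[2]$, hence $\gamma=\hat\beta_4\beta_3$ acts as the identity on $E[2]$, so $\gamma-1\in 2\,\End E$. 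One then verifies that the only norm-$35$ elements of $\BZ\bigl[\tfrac{1+\sqrt{-59}}{2}\bigr]$ are $\tfrac{\pm9\pm\sqrt{-59}}{2}$, none of which is congruent to $1$ modulo~$2$ in that ring, and this contradiction eliminates $\Delta=-59$. Without this step the list in case~\eqref{p=5} cannot be pinned down. Two minor points: when $\alpha_4$ is a rational integer you must also allow $\alpha_4=0$ and $\alpha_4=\pm2$ (both give $8$ and $4$ as degrees, with $\beta_4$ factoring through~$[2]$, landing in cases $p=2$ and $p=1$ respectively); and your closing remark about verifying that $\psi$ is not the restriction of an isomorphism is irrelevant to this proposition, which \emph{assumes} $C$ exists and merely derives constraints on $E$ and~$F$.
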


After Proposition~\ref{P:Deltap}, we see that only finitely many pairs $(E,F)$
can occur. Suppose $(E,F)$ is a pair satisfying the conclusion of 
Proposition~\ref{P:Deltap} for some $p$ and $\Delta$, and suppose $\psi$ is one
of the six isomorphisms $E[2]\to F[2]$. We can compute the curve $C$ associated 
to this data as in the second statement of Proposition~\ref{P:structure}, if 
such a $C$ exists. For each such $C$ and $E$, we can compute a $\BZ$-basis for
$\Hom(C,E)$, and using Corollary~\ref{C:alphabeta2} we can compute the positive
definite quadratic form given by the degree map. It is then an easy matter to 
check whether this form represents all integers $n$ with $1<n<32$, which is 
obviously a necessary condition for the form to represent all integers $n>1$. 
Our method for doing this is explained in Section~\ref{S:enumerating}, and 
Magma~\cite{BosmaCannonEtAl1997} code for carrying out the computation is
available on the GitHub repository mentioned in 
Section~\ref{S:enumerating}.\footnote{We ran all of the code mentioned in this paper on an Apple M4 Max chip
running Magma V2.29-6 on MacOS Tahoe 26.2.}

It turns out that each quadratic form arising in this way that represents all 
the integers from $2$ to $31$ is equivalent to one of the forms $q_1$, $q_2$,
$q_3$, and $q_4$ given above, and therefore the $(C,E)$ pairs that we have found
satisfy the conditions of Theorem~\ref{T:main}. It is then a simple matter to
compute the data presented in Table~\ref{table:data}, and to see that there are
only $20$ such pairs.

In Section~\ref{S:models} we compute models for the curves~$C$. In 
Section~\ref{S:forms} we prove Proposition~\ref{P:forms}, and in 
Section~\ref{S:intersection} we prove Theorems~\ref{T:intersection1} 
and~\ref{T:intersection2}.

\begin{rem}
We note here that there is a result similar to Theorem~\ref{T:main} for fields
of positive characteristic, if we restrict our attention to \emph{ordinary} 
curves. Namely, if $C$ is an ordinary genus-$2$ curve over an algebraically
closed field $K$ of positive characteristic, and if $C$ has maps of every degree
$n>1$ to an elliptic curve $E$, then $C$ is the reduction of one of the curves
from Theorem~\ref{T:main}. This follows from Serre-Tate lifting 
(\cite{Katz1981},\cite{Norman1981}), which shows that an ordinary example over
$K$ can be lifted to an example in characteristic~$0$. Note, however, that in
general not all of the curves in the theorem will have good ordinary reduction
modulo a given prime, so there will not necessarily be $20$ examples of such
curves over a given~$K$.

We have not investigated the situation for non-ordinary curves over a field of
positive characteristic. The endomorphism ring of a supersingular elliptic curve
is a $\BZ$-module of rank $4$, so in some sense it should be easier for there to
exist maps $\alpha$ and $\beta$ as in Corollary~\ref{C:alphabeta2} that can 
produce an $\omega$ of a given degree. For this reason, we expect that over some
fields there will be examples of $(C,E)$ pairs that are not reductions of our
$20$ curves in characteristic zero.
\end{rem}

\begin{rem}
Here we give some historical background. As we mentioned at the beginning of
this section, the study of genus-$2$ curves with maps to elliptic curves goes
back nearly two centuries. In \S12 of the third supplement to his 
\emph{Trait\'e des fonctions elliptiques}~\cite{Legendre1828}, published
in~1828, Legendre shows how several ``ultra-elliptic'' integrals involving
expressions of the form $\sqrt{x(1-x^2)(1-k^2x^2)}$ can be expressed in terms of
elliptic integrals. Jacobi, in a postscript to his 1832 
review~\cite{Jacobi1832,Jacobi:Werke1} of Legendre's book, notes that Legendre's
examples can be generalized; rephrased in modern terminology, Jacobi's
observation is that every hyperelliptic curve of the form
\[ 
y^2 = x (x-1) (x-\lambda) (x-\mu) (x-\lambda\mu) 
\]
admits a degree-$2$ map to an elliptic curve. Legendre's examples come by 
taking $\lambda = -1$. Later, K\"onigsberger~\cite{Konigsberger1867} and 
Picard~\cite[\S9]{Picard1883} each proved that every genus-$2$ curve with a 
degree-$2$ map to an elliptic curve occurs in Jacobi's family.

The study of genus-$2$ curves with maps to elliptic curves continued, and
flourished, in the latter half of the $19$th century, with the focus shifting to
the period matrices of such curves and the endomorphism rings of their
Jacobians. In an 1874 paper, not published in a journal until 1884, 
Kowalevski~\cite{Kowalevski1884} quotes an unpublished result of Weierstrass 
that describes the period matrices of curves whose associated abelian integrals
can be reduced to elliptic integrals; in 1884 Poincar\'e~\cite{Poincare1884}
provided a proof of Weierstrass's theorem. For the special case of genus-$2$ 
curves, a better version of Weierstrass's result was given (independently) by
Picard~\cite{Picard1883}, and in 1884 Picard showed that his result can also be
deduced directly from that of Weierstrass~\cite{Picard1884}. At the very end of
the $19$th century, Humbert published a series of  
papers~\cite{Humbert1899,Humbert1900,Humbert1901} concerning genus-$2$ curves
whose Jacobians have endomorphism rings larger than $\BZ$; Humbert's curves
having ``singular relations with square invariant'' have  zero-divisors in their
endomorphism rings, and hence have maps to elliptic curves.

Research in these matters has continued to this day. In more modern terminology,
one can fix an integer $n>1$ and study the moduli space of triples 
$(C,E,\varphi)$, where $\varphi\colon C\to E$ is a map of degree $n$ from a
curve of genus~$2$ to an elliptic curve. (Usually one demands in addition that 
the map be minimal, in the sense defined above.) Some work concerns the general
case (see for example~\cite{FreyKani1991,Kani1997,KaniSchanz1998}), but there is
also interest in considering specific small values of~$n$ and constructing more
or less explicit models of the corresponding moduli space, perhaps also giving
equations for the triples $(C,E,\varphi)$ themselves.

For $n=2$, Jacobi's previously-cited work gives such equations over
algebraically closed fields; in~\cite{HoweLeprevostPoonen2000}, the authors
analyze the situation over non-algebraically closed fields. For the case $n=3$,
there are works spanning 141 years, including
\cite{Bolza1898a,Bolza1898b,
Brioschi1891,
BrokerHoweLauterStevengagen2015,
Goursat1885a,
Goursat1885b,
Hermite1876,
Kuhn1988,
Shaska2004}.
The case $n=4$ is considered in older~\cite{Bolza1886} and more 
recent~\cite{BruinDoerksen2011} research, and there is work on the case $n=5$
as well~\cite{MagaardShaskaVolklein2009}. The paper~\cite{Kumar2015} considers
all $n$ up to $11$, but is more focused on models for the moduli space itself 
rather than on the triples $(C,E,\varphi)$, partly because the known models for
$C$ become quite complicated even for $n=4$.
\end{rem}

\section{Consequences of the existence of a degree-\texorpdfstring{$2$}{2} map}
\label{S:degree2}

In this section we prove Proposition~\ref{P:structure} and its corollaries.

\begin{proof}[Proof of Proposition~\textup{\ref{P:structure}}]
Suppose $C$ is a genus-$2$ curve over $\BC$ with a degree-$2$ map $\varphi$ to
an elliptic curve~$E$. Then the special case $N=2$ 
of~\cite[Theorem~1.5]{Kani1997} shows that there is another elliptic curve $F$
and an isomorphism $\psi\colon  E[2]\to F[2]$ such that the Jacobian of $C$ is
isomorphic to the quotient of $E\times F$ by the graph of $\psi$, and such that
there is a degree-$2$ map $\chi\colon C\to F$.

Furthermore, if we let $G\subset (E\times F)[2]$ be the graph of $\psi$, then 
the isogeny $\varphi^*\times\chi^* \colon E\times F\to \Jac C$ has kernel $G$, 
and we have a diagram
\[
\xymatrix{
E\times F \ar[rr]^{\left[\begin{smallmatrix}
\scriptstyle 2&\scriptstyle 0\\ \scriptstyle 0&\scriptstyle 2
\end{smallmatrix}\right]} \ar[d]^{\varphi^*\times \chi^*} &&
E\times F  \\
\Jac C\ar[rr]^{1}&&\Jac C\ar[u]^{\varphi_*\times \chi_*}\rlap{\,.}
}
\]

We can then extend this diagram so that the compositions of the horizontal 
arrows on the top line and on the bottom line are the multiplication-by-$2$
maps:

\[
\xymatrix{
E\times F \ar[rr]^{\left[\begin{smallmatrix}
\scriptstyle 2&\scriptstyle 0\\ \scriptstyle 0&\scriptstyle 2
\end{smallmatrix}\right]} \ar[d]^{\varphi^*\times \chi^*} &&
E\times F \ar[rr]^{\left[\begin{smallmatrix}
\scriptstyle 1&\scriptstyle 0\\ \scriptstyle 0&\scriptstyle 1
\end{smallmatrix}\right]} &&
E\times F\ar[d]_{\varphi^*\times \chi^*}   \\
\Jac C\ar[rr]^{1}&&\Jac C\ar[u]^{\varphi_*\times \chi_*}\ar[rr]^{2}&&\Jac C\rlap{\,.}
}
\]

The right half of this diagram is nothing other than 
diagram~\eqref{D:structure}, which is what we want to show exists. The 
uniqueness of the pair $(\chi,\psi)$ up to automorphisms of~$F$ is part
of~\cite[Theorem~1.5]{Kani1997}, and the converse follows from this as well.
\end{proof}

\begin{rem}
\label{R:periodstructure}
We note that we can give a period matrix for the Jacobian of $C$ in terms of the
period matrices for $E$ and $F$ and the isomorphism $\psi$, as follows. First, 
$E$ has a period lattice $\Lambda_E$ of the form $\langle 1, \tau\rangle$ for a
unique $\tau$ in the fundamental domain $\CF_1$, and there is a unique $\sigma$
in the fundamental domain $\CF_2$ such that 
\begin{itemize}
\item $\Lambda_F \colonequals\langle 1, \sigma\rangle$ is a period matrix 
      for~$F$, and 
\item the isomorphism $\psi\colon E[2]\to F[2]$ sends the $2$-torsion point 
      $1/2 \bmod \Lambda_E$ of $E(\BC)$ to the $2$-torsion point 
      $\sigma/2 \bmod \Lambda_F$ of $F(\BC)$, and the point 
      $\tau/2\bmod \Lambda_E$ of $E(\BC)$ to the point 
      $1/2 \bmod \Lambda_F$ of $F(\BC)$.
\end{itemize}      
Then we can take
\begin{equation}
\label{EQ:periodmatrix}
\Lambda_C\colonequals
\begin{pmatrix}
1 & 0 & \tau/2 &      1/2 \\
0 & 1 & 1/2    & \sigma/2
\end{pmatrix}
\end{equation}
to be a period matrix for the Jacobian of~$C$. This is essentially a result of
Picard; see \cite{Picard1883} and \cite{Picard1884}.

We also know the sesquilinear form on $\BC^2$ that represents the principal
polarization on $\Jac C$, because it is derived from the product polarization
on $\Lambda_E\times \Lambda_F$.  Namely, if $\delta$ is any multiple of 
$\sqrt{-1}$ and we write $\tau = a + b\delta$ and $\sigma = c + d\delta$ for
real numbers $a,b,c,d$, then the sesquilinear form applied to elements 
$(z_1,z_2)$ and $(w_1,w_2)$ of $\BC^2$ gives the value
\begin{equation}
\label{EQ:polarization}
\Trace_{\BC/\BR} \Big( 
                 \frac{w_1\bar{z}_1}{b\delta} + \frac{w_2\bar{z}_2}{d\delta}
                 \Big)\,.
\end{equation}
One can check that the matrix of values of this pairing, applied to pairs of
column vectors in the basis for $\Lambda_C$ given above, is
\[
\begin{pmatrix*}[r]
0 & \phantom{-}0 & -1 &  0\\
0 &            0 &  0 & -1\\
1 &            0 &  0 &  0\\
0 &            1 &  0 &  0
\end{pmatrix*},
\]
so the pairing does indeed give a principal polarization on~$\Lambda_C$.
\end{rem}

\begin{proof}[Proof of Corollary~\textup{\ref{C:alphabeta}}]
Let $\alpha = \varphi_*\omega^*$ and $\beta = \chi_*\omega^*$, and let
$\alphahat$ and $\betahat$ be the dual morphisms of $\alpha$ and~$\beta$. We can
extend diagram~\eqref{D:structure} as follows:
\begin{equation}
\label{D:fundamental}
\vcenter{\xymatrix{
&&E\times F \ar[rr]^{\left[\begin{smallmatrix}
\scriptstyle 1&\scriptstyle 0\\ \scriptstyle 0&\scriptstyle 1
\end{smallmatrix}\right]} &&
E\times F \ar[d]_{\varphi^*\times \chi^*}
\ar[rrd]^{\alphahat + \betahat}&&\\
E\ar[rr]^{\omega^*}\ar[rru]^{\alpha\times\beta\quad}&&
\Jac C\ar[u]_{\varphi_*\times \chi_*}\ar[rr]^{2}
&&\Jac C\ar[rr]^{\omega_*} && E\rlap{\,.}
}}
\end{equation}
Following the bottom edge of the diagram gives us
multiplication by $2\deg\omega$ on $E$. The map from $E$ to $E$ we get from 
following the top edges of the diagram is the sum of the endomorphisms 
$\alphahat\alpha$ and $\betahat\beta$ of~$E$. But $\alphahat\alpha$ is
multiplication by $\deg\alpha$, and $\betahat\beta$ is multiplication
by~$\deg\beta$, so we see that $2\deg\omega = \deg\alpha+\deg\beta$, as claimed.

Let $P$ be a point of order $2$ on~$E$. Then the image of $P$ under the map from
the lower left of the diagram to the $E\times F$ on the upper right is the pair 
$(\alpha(P), \beta(P))$, while the image of $P$ in rightmost copy of $\Jac C$
is~$0$, because the middle map from $\Jac C$ to $\Jac C$ is multiplication 
by~$2$. Therefore, $(\alpha(P), \beta(P))$ lies in the kernel of the isogeny
$\varphi^*\times\chi^*$, which is the graph of~$\psi$, and it follows that
$\beta(P) = \psi(\alpha(P))$.
\end{proof}

\begin{proof}[Proof of Corollary~\textup{\ref{C:alphabeta2}}]
Given $\alpha$ and $\beta$ as in the statement of the corollary, consider the
following diagram:
\begin{equation}
\label{D:converse}
\vcenter{
\xymatrix{
&&E\times F \ar[rr]^{\left[\begin{smallmatrix}
\scriptstyle 1&\scriptstyle 0\\ \scriptstyle 0&\scriptstyle 1
\end{smallmatrix}\right]} &&
E\times F \ar[d]_{\varphi^*\times \chi^*}
\ar[rrd]^{\alphahat + \betahat}&&\\
E\ar[rru]^{\alpha\times\beta\quad}&&
\Jac C\ar[u]_{\varphi_*\times \chi_*}\ar[rr]^{2}
&&\Jac C&& E\rlap{\,.}
}}
\end{equation}
Our goal is to produce a morphism $\omega\colon C\to E$ that will allow us to
extend this diagram to diagram~\eqref{D:fundamental}.

By assumption, we have $\beta(P) = \psi(\alpha(P))$ for every $P\in E[2]$, so
the kernel of the map $\alphahat + \betahat$ from $E\times F$ to $E$ contains
the kernel of $\varphi^*\times \chi^*$. It follows that there is a map 
$\varpi\colon\Jac C\to E$ that we can use to complete the triangle on the 
right-hand side of~\eqref{D:converse}. (We note that this map is unique, because
$\varphi^*\times \chi^*$ is an isogeny.)

Choose an Abel--Jacobi embedding of $C$ into its Jacobian, and let $\omega$ be
the composition of this embedding with $\varpi$. Then we automatically have
$\varpi = \omega_*$, and by duality we find that $\omega^*\colon E\to\Jac C$
completes the triangle on the left-hand side of~\eqref{D:converse}. This gives
us~\eqref{D:fundamental}, and proves the corollary.
\end{proof}

\section{Consequences of the existence of maps of degree \texorpdfstring{$3$}{3} and \texorpdfstring{$4$}{4}}
\label{S:degrees3and4}
In this section we prove Proposition~\ref{P:Deltap}. We begin with a lemma that
records some facts about endomorphism rings of elliptic curves with noncyclic 
endomorphisms of small degree. 

\begin{lemma}
\label{L:discs}
Let $E$ be an elliptic curve over $\BC$ that has a cyclic endomorphism $\alpha$,
and let $\Delta$ be the discriminant of the endomorphism ring of $E$.
\begin{enumerate}
\item If $\deg\alpha=2$, then $-\Delta\in\{4$, $7$, $8\}$.
\item If $\deg\alpha=3$, then $-\Delta\in\{3$, $8$, $11$, $12\}$.
\item If $\deg\alpha=4$, then $-\Delta\in\{7$, $12$, $15$, $16\}$.
\item If $\deg\alpha=5$, then $-\Delta\in\{4$, $11$, $16$, $19$, $20\}$.
\item If $\deg\alpha=6$, then $-\Delta\in\{8$, $15$, $20$, $23$, $24\}$.
\item If $\deg\alpha=7$, then $-\Delta\in\{3$, $7$, $12$, $19$, $24$, $27$, $28\}$.
\item If $\deg\alpha=10$, then $-\Delta\in\{4$, $15$, $24$, $31$, $36$, $39$, $40\}$.
\item \label{deg35} If $\deg\alpha=35$, then $-\Delta\in\{19$, $31$, $35$, $40$, $59$, $76$, $91,104$, $115$, $124$, $131$, $136,$ $139$, $140\}$.
\end{enumerate}
\end{lemma}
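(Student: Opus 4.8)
The plan is to translate the statement into a finite question about binary quadratic forms. First I would observe that a cyclic isogeny $\alpha\colon E\to E$ of degree $\deg\alpha>1$ cannot exist when $\End(E)=\BZ$: the only endomorphisms of such a curve are the multiplications $[m]$, whose kernels $(\BZ/m)^2$ are noncyclic for $m>1$. So $E$ must have complex multiplication, and $\End(E)$ is the imaginary quadratic order $\CO$ of the discriminant $\Delta$ appearing in the statement. Writing $\CO=\BZ\oplus\BZ\tau$ with $\tau=\sqrt\Delta/2$ if $\Delta\equiv0\pmod4$ and $\tau=(1+\sqrt\Delta)/2$ if $\Delta\equiv1\pmod4$, the degree map on $\CO$ is its norm form, so an endomorphism $\alpha=x+y\tau$ has $\deg\alpha=f_\Delta(x,y)$, where $f_\Delta$ is the principal binary quadratic form of discriminant $\Delta$: that is, $f_\Delta=x^2+(|\Delta|/4)\,y^2$ when $4\mid\Delta$, and $f_\Delta=x^2+xy+\tfrac{1+|\Delta|}{4}\,y^2$ when $\Delta\equiv1\pmod4$.

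Next I would pin down when $\ker\alpha$ is cyclic. Realizing $E$ as $\BC/\mathfrak a$ for a proper (equivalently, invertible) $\CO$-ideal $\mathfrak a$, multiplication by $\alpha$ has kernel $\alpha^{-1}\mathfrak a/\mathfrak a\cong\mathfrak a/\alpha\mathfrak a$, and since $\mathfrak a$ is an invertible $\CO$-module this abelian group is isomorphic to $\CO/\alpha\CO$. Now $\CO/\alpha\CO$ is cyclic exactly when $\alpha$ is primitive, i.e.\ not divisible in $\CO$ by any rational integer $>1$, equivalently $\gcd(x,y)=1$: if $g\colonequals\gcd(x,y)>1$ then $g\mid\alpha$ and $\CO/\alpha\CO$ surjects onto $(\BZ/g)^2$, while conversely the first elementary divisor of multiplication by $\alpha$ in the basis $\{1,\tau\}$ is $\gcd(x,y)$. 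Hence there is a cyclic endomorphism of degree $n$ in $\CO$ if and only if $f_\Delta$ \emph{primitively} represents $n$. Finally, a primitive representation of any $n>1$ must have $y\ne0$ (since $f_\Delta(x,0)=x^2$ forces $|x|=1$), and then $n=f_\Delta(x,y)\ge|\Delta|/4$, so $|\Delta|\le 4\deg\alpha$ and only finitely many $\Delta$ can occur.

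What remains is purely mechanical: for each $n\in\{2,3,4,5,6,7,10,35\}$ and each negative discriminant $\Delta$ with $|\Delta|\le 4n$, decide whether the principal form $f_\Delta$ primitively represents $n$. For fixed $n$ and $\Delta$ this is a bounded search ($y\ne0$ forces $y^2\le 4n/|\Delta|$, and then $f_\Delta(x,y)=n$ leaves only finitely many $x$), so one lists all solutions and tests $\gcd(x,y)=1$; compiling the discriminants that survive yields the eight lists in the statement. For $n\le 10$ only a handful of forms arise and the check is easily done by hand, and even for $n=35$, where roughly seventy discriminants must be examined, it is a quick machine computation. I expect the one point needing real care to be the second paragraph: verifying that $\ker\alpha$ has the same group structure as $\CO/\alpha\CO$ (this is why one works with $E=\BC/\mathfrak a$ for a proper ideal $\mathfrak a$, so that invertibility applies) and that cyclicity of this group is exactly primitivity of $\alpha$. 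Once that dictionary is in place, the enumeration is routine.
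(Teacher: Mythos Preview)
Your proposal is correct and follows essentially the same approach as the paper: translate the existence of a cyclic endomorphism of degree $n$ into the primitive representation of $n$ by the principal form of discriminant $\Delta$, then enumerate the finitely many $\Delta$ with $|\Delta|\le 4n$. The paper's proof is terser---it simply asserts the norm form and the ``cyclic $\Leftrightarrow$ coprime'' criterion and leaves the enumeration to the reader---while you supply the supporting details (the CM reduction, the $\CO/\alpha\CO$ argument, the explicit bound), but the underlying argument is the same.
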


\begin{proof}
Since $E$ has a cyclic endomorphisms of positive degree, its endomorphism ring
$\CO$ is an imaginary quadratic order, and
$\CO\cong\BZ[(\Delta + \sqrt\Delta)/2]$. If we write
$\alpha = x + y(\Delta + \sqrt\Delta)/2$ for integers $x$ and $y$, then $x$ and
$y$ must be coprime to one another (because $\alpha$ is cyclic), and we have
$\degree\alpha = x^2 + \Delta xy + y^2(\Delta^2-\Delta)/4$. Given a value for
$\degree\alpha$, it is a simple matter to find the discriminants $\Delta$ for
which it is possible to find coprime $x$ and $y$ satisfying the equality above.
We leave the details to the reader.
\end{proof}

\begin{proof}[Proof of Proposition~\textup{\ref{P:Deltap}}]
Suppose $C$ is a genus-$2$ curve over $\BC$ that has maps of degree $2$, $3$,
and $4$ to an elliptic curve~$E$. Let $\varphi$ be a degree-$2$ map from $C$ 
to~$E$, and let the elliptic curve $F$, the degree-$2$ map $\chi\colon C\to F$, 
and the isomorphism $\psi\colon E[2]\to F[2]$ be as in 
Proposition~\ref{P:structure}.

By Corollary~\ref{C:alphabeta}, the existence of the degree-$3$ map from $C$ to
$E$ implies that there is an endomorphism $\alpha$ of $E$ and a morphism 
$\beta\colon E\to F$ such that $\deg\alpha + \deg\beta = 6,$ and such that 
\begin{equation}
\label{EQ:antiisometry}
\beta(P)= \psi(\alpha(P)) \text{\quad for all $P\in E[2]$\,.}
\end{equation}
In particular,~\eqref{EQ:antiisometry} implies that 
\begin{equation}
\label{EQ:kernels}
\#(\ker\alpha)[2] = \#(\ker\beta)[2]\,.
\end{equation}
We enumerate the possibilities below. Note that Lemma~\ref{L:discs} tells us
the possible discriminants of the endomorphism ring of an elliptic curve with
a cyclic isogeny of certain degrees, and we use this without further comment in
the list below to indicate how each possibility is covered by one of the cases
in the statement of the proposition.
\begin{enumerate}
\item[1.] $\deg\alpha = 0$ and $\deg\beta = 6$. 
      This cannot happen, because $\#(\ker\alpha)[2] = 4$ while 
      $\#(\ker\beta)[2] = 2$, contradicting~\eqref{EQ:kernels}.
\item[2.] $\deg\alpha = 1$ and $\deg\beta = 5$. 
      This implies that $F$ is $5$-isogenous to $E$. We explore this case
      further in the discussion below.
\item[3.] $\deg\alpha = 2$ and $\deg\beta = 4$. 
      By~\eqref{EQ:kernels}, we see that $\beta$ must be a cyclic isogeny. More 
      specifically, \eqref{EQ:antiisometry} implies that $\ker\alpha$ is 
      contained in $\ker\beta$, so $\beta$ is the composition of $\alpha$ with a
      $2$-isogeny from $E$ to~$F$. This possibility therefore falls under the 
      case $p=2$ of the statement of the proposition.
\item[4.] $\deg\alpha = 3$ and $\deg\beta = 3$. 
      This falls under the case $p=3$ of the statement of the proposition.
\item[5.] $\deg\alpha = 4$ and $\deg\beta = 2$. 
      We see from~\eqref{EQ:kernels} that $\alpha$ must be a cyclic isogeny.
      Therefore this falls under the case $p=2$ of the statement of the
      proposition.
\item[6.] $\deg\alpha = 5$ and $\deg\beta = 1$. 
      This falls under the case $p=1$ of the statement of the proposition.
\item[7.] $\deg\alpha = 6$ and $\deg\beta = 0$. 
      Equation~\eqref{EQ:kernels} shows that this case cannot occur.
\end{enumerate}

The only possibility not covered by the conclusion of the proposition is that
$E$ is arbitrary and $F$ is $5$-isogenous to $E$. For the rest of the proof we 
will assume that we are in this case.

Now we consider the consequences of the existence of a degree-$4$ map from $C$
to~$E$. Corollary~\ref{C:alphabeta} implies that there is an endomorphism 
$\alpha$ of $E$ and a morphism $\beta\colon E\to F$ such that 
$\deg\alpha + \deg\beta = 8,$ with~\eqref{EQ:antiisometry} 
and~\eqref{EQ:kernels} holding. We list the possibilities, and again use 
Lemma~\ref{L:discs} without comment to show which cases of the proposition
covers them.
\begin{enumerate}
\item[1.] $\deg\alpha = 0$ and $\deg\beta = 8$. 
      From~\eqref{EQ:kernels} we see that $\ker\beta$ must contain $E[2]$, so
      $\beta$ is the composition of a $2$-isogeny $E\to F$ with the
      multiplication-by-$2$ map on $E$. We see that $F$ must be $2$-isogenous
      to~$E$. Since $F$ is also $5$-isogenous to $E$, we see that $E$ has an
      endomorphism of degree~$10$. We find that this possibility falls under the
      case $p=2$ of the proposition.
\item[2.] $\deg\alpha = 1$ and $\deg\beta = 7$. 
      We will discuss this case below.
\item[3.] $\deg\alpha = 2$ and $\deg\beta = 6$.
      This falls under the case $p=5$ of the statement of the proposition.
\item[4.] $\deg\alpha = 3$ and $\deg\beta = 5$. 
      This falls under the case $p=5$ of the statement of the proposition.
\item[5.] $\deg\alpha = 4$ and $\deg\beta = 4$. 
      If $\alpha$ is cyclic, then this falls under the case $p=5$ of the
      statement of the proposition. If $\alpha$ is not cyclic, then 
      by~\eqref{EQ:kernels} neither is $\beta$, which means that $F\cong E$.
      Therefore, there is an endomorphism of $E$ of degree~$5$. This falls under
      the case $p=1$ of the statement of the proposition.
\item[6.] $\deg\alpha = 5$ and $\deg\beta = 3$. 
      This falls under the case $p=3$ of the statement of the proposition.
\item[7.] $\deg\alpha = 6$ and $\deg\beta = 2$. 
      This falls under the case $p=5$ of the statement of the proposition.
\item[8.] $\deg\alpha = 7$ and $\deg\beta = 1$. 
      This falls under the case $p=1$ of the statement of the proposition.
\item[9.] $\deg\alpha = 8$ and $\deg\beta = 0$. 
      By~\eqref{EQ:kernels} we see that $\alpha$ cannot be cyclic, so it is the
      composition of  multiplication-by-$2$ with an endomorphism of $E$ of 
      degree~$2$. This falls under the case $p=5$ of the statement of the 
      proposition.
\end{enumerate}

This leaves us with one situation unaddressed: when there is both a $5$-isogeny 
from $E$ to $F$ and a $7$-isogeny from $E$ to~$F$, so that $E$ has an 
endomorphism of degree~$35$. If the endomorphism ring of $E$ does not have 
discriminant $-59$, then Lemma~\ref{L:discs} shows that this situation falls
under the case $p=5$ of the proposition. To finish the proof of the proposition,
we must show that the case of discriminant $-59$ cannot occur.

Suppose, in the situation of the proposition, that the endomorphism ring of the
elliptic curve $E$ has discriminant $-59$ and that $F$ is both $5$-isogenous and
$7$-isogenous to $E$. Since there are isogenies from $E$ to $F$ of coprime 
degrees, $F$ must also have endomorphism ring with discriminant $-59$. We check
that then there are morphisms from $E$ to $F$ of degrees $0$, $3$, $5$, and~$7$,
and no other degrees less than $9$, and that there are endomorphisms of $E$ of 
degrees $0$, $1$, and $4$, and no other degrees less than $9$.

Since we are assuming that there is a degree-$3$ map from $C$ to $E$, 
Corollary~\ref{C:alphabeta} says that there is an $\alpha_3\in \End E$ and a
$\beta_3\in\Hom(E,F)$ such that $\deg\alpha_3 + \deg\beta_3 = 6$ and such that
$\beta_3(P) = \psi(\alpha_3(P))$ for all $P\in E[2]$. The only possibility is
that $\deg\alpha_3 = 1$ and $\deg\beta_3 = 5$. Note that $\alpha_3 = \pm 1$, so
in fact $\beta_3(P) = \psi(P)$ for all $P\in E[2]$.

Likewise, since there is a degree-$4$ map from $C$ to $E$ there is an
$\alpha_4\in \End E$ and a $\beta_4\in\Hom(E,F)$ such that
$\deg\alpha_4 + \deg\beta_4 = 8$ and such that $\beta_4(P) = \psi(\alpha_4(P))$
for all $P\in E[2]$. The only possibility is $\deg\alpha_4 = 1$ and 
$\deg\beta_4 = 7$, so that $\alpha_4 = \pm 1$ and  $\beta_4(P) = \psi(P)$ for 
all $P\in E[2]$.

If we let $\betahat_4$ be the dual isogeny of $\beta_4$, so that 
$\betahat_4\beta_4 = 7$, then $P = \betahat_4(\psi(P))$ for all
$P\in E[2]$. Therefore, if we set $\gamma = \betahat_4\beta_3$, then 
$\gamma$ is a degree-$35$ endomorphism of $E$ that acts trivially on~$E[2]$.
That implies that $\gamma-1$ kills $E[2]$, so $\gamma-1 = 2\delta$ for some 
$\delta\in\End E$.

But we check that the only elements of norm $35$ in 
$\End E \cong \BZ[\frac{1+\sqrt{-59}}{2}]$ are $\frac{\pm 9 \pm \sqrt{-59}}{2}$,
and none of these can be written as 
$1 + 2\delta$ for $\delta \in \BZ[\frac{1+\sqrt{-59}}{2}]$. Therefore, $\End E$
cannot have discriminant~$-59$.
\end{proof}

\begin{rem}
We choose to exclude the discriminant $-59$ from the statement of 
Proposition~\ref{P:Deltap}, even though its exclusion complicates the proof,
because including it would make one of our later computational steps slightly
more awkward. See Remark~\ref{R:59}.
\end{rem}

\section{Enumerating possible examples}
\label{S:enumerating}

Proposition~\ref{P:Deltap} gives our first step toward our proof of 
Theorem~\ref{T:main} by specifying a finite list of possible pairs $(E,F)$ from
which to construct examples of pairs $(C,E)$ as in Theorem~\ref{T:main}. In this
section we explain how a computer calculation gives our second step toward the
proof, by greatly reducing the number of possibilities.

\begin{proposition}
\label{P:Deltas}
Let $C$ and $E$ be as in Theorem~\textup{\ref{T:main}}, let 
$\varphi\colon C\to E$ be a degree-$2$ map, let $F$ be as in
Proposition~\textup{\ref{P:structure}}, and let $\Delta_E$ and $\Delta_F$ be the
discriminants of the endomorphism rings of $E$ and~$F$, respectively. Then the 
pair $(\Delta_E, \Delta_F)$ is one of the following\textup{:}
\begin{align*}
& (-3,-3)   && (-7,-7)  && (-8,-72)  && (-12,-12) && (-16,-16) && (-20,-20) \\
& (-4,-4)   && (-8,-8)  && (-11,-11) && (-12,-48) && (-16,-64) && (-24,-24) \\
& (-4,-100) && (-8,-32) && (-12,-3)  && (-16,-4)  && (-19,-19) && (-36,-36). 
\end{align*}
In the cases where $\End E$ and $\End F$ have the same discriminant~$\Delta$, 
the curves $E$ and $F$ are isomorphic to one another when 
$-\Delta\in\{3,4,7,8,11,12,16,19,20\}$ and are not isomorphic to one another
when $-\Delta\in \{24,36\}$.
\end{proposition}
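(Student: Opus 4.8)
The plan is to convert Proposition~\ref{P:Deltap} into a finite search. That proposition already restricts $\Delta_E$, together with the degree $p$ of a cyclic isogeny $E\to F$, to finitely many possibilities. Since $F$ is obtained from $E$ by a cyclic isogeny of prime degree $p$ (or by an isomorphism, when $p=1$), the ring $\End F$ is an order in the same imaginary quadratic field $K=\BQ(\sqrt{\Delta_E})$ as $\End E$, and the conductors of $\End E$ and $\End F$ differ by a factor of at most $p$. Hence $\Delta_F$ lies in $\{\Delta_E/p^2,\ \Delta_E,\ p^2\Delta_E\}$, where we keep only the elements of this set that are actual discriminants. This reduces the candidate pairs $(\Delta_E,\Delta_F)$, each tagged with an admissible $p$, to an explicit finite list, considerably longer than the one in the statement.

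Next, for each triple $(\Delta_E,\Delta_F,p)$ on that list I would enumerate every pair $(E,F)$ of complex elliptic curves with the prescribed endomorphism discriminants that admits a cyclic $p$-isogeny: the curve $E$ runs over the $h(\Delta_E)$ isomorphism classes of curves with $\End E\cong\CO_{\Delta_E}$, and for each such $E$ there are only finitely many cyclic subgroups $G\subseteq E[p]$ of order $p$, giving $F=E/G$. For each resulting $(E,F)$ and each of the six isomorphisms $\psi\colon E[2]\to F[2]$, Proposition~\ref{P:structure} tells us either that there is no genus-$2$ curve $C$ at all --- exactly when $\psi$ is the restriction of an isomorphism $E\to F$ --- or that there is a genus-$2$ curve $C$ with a degree-$2$ map to $E$ realizing $(F,\psi)$. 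In the latter case Corollaries~\ref{C:alphabeta} and~\ref{C:alphabeta2} identify the set of degrees of nonconstant maps $C\to E$ with the set of nonzero values of the positive-definite quaternary quadratic form $(\alpha,\beta)\mapsto(\deg\alpha+\deg\beta)/2$ on the rank-$4$ lattice
\[
L_\psi=\bigl\{(\alpha,\beta)\in\End E\times\Hom(E,F):\ \beta|_{E[2]}=\psi\circ\alpha|_{E[2]}\bigr\}.
\]
Since a pair $(C,E)$ satisfying condition~(3) of Theorem~\ref{T:main} must in particular admit maps of every degree between $2$ and some chosen bound, we may discard any triple $(E,F,\psi)$ for which this quadratic form omits one of those values; this is a routine check.

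Carrying out this elimination for all triples leaves precisely the pairs $(\Delta_E,\Delta_F)$ listed in the proposition. For the remaining assertion, observe that when $-\Delta\in\{3,4,7,8,11,12,16,19\}$ the class number $h(\Delta)$ equals $1$, so any two elliptic curves with endomorphism ring $\CO_\Delta$ are isomorphic; in particular $E\cong F$. For $-\Delta\in\{20,24,36\}$, where $h(\Delta)=2$, one reads off from the surviving data --- equivalently, from whether the binary degree form on $\Hom(E,F)$ represents $1$ --- that $E\cong F$ precisely when $-\Delta=20$, while $E\not\cong F$ when $-\Delta\in\{24,36\}$.

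The delicate part is not the quadratic-form arithmetic, which is mechanical once the forms are written down, but the bookkeeping needed to make the enumeration provably exhaustive: listing all cyclic $p$-isogenies and all six gluings $\psi$, and correctly tracking the action of $\End E$ and $\Hom(E,F)$ on the two-torsion subgroups $E[2]$ and $F[2]$, so that $L_\psi$ --- and hence the form whose values are the available degrees --- is computed without error. This is exactly the computation implemented in the accompanying Magma code.
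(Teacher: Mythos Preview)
Your proposal is correct and follows the same overall strategy as the paper: start from the finite list of $(\Delta_E,p)$ in Proposition~\ref{P:Deltap}, enumerate the finitely many resulting $(E,F)$, and discard those for which maps of some small degree $n\le 31$ cannot exist. The difference is in how the filtering is implemented. The paper does \emph{not} enumerate the six isomorphisms $\psi$ or build the full rank-$4$ lattice $L_\psi$ at this stage; instead it uses only the weakened necessary condition $\#(\ker\alpha)[2]=\#(\ker\beta)[2]$ from~\eqref{EQ:kernels}, checked via $j$-invariants and classical modular polynomials $\Psi_m$. This coarser test is cheaper---it avoids tracking the action on $2$-torsion entirely---and already cuts the list down to the eighteen pairs in the statement; the finer computation you describe, with the six $\psi$'s and the full degree form, is deferred to the proof of Proposition~\ref{P:sigmatau}. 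Your single-stage approach is conceptually cleaner and would certainly work, but it front-loads the bookkeeping you flag as ``delicate''; the paper's two-stage filter postpones that bookkeeping until fewer candidates remain. Note also that the Magma code attached to \emph{this} proposition implements the coarse test, not the one you describe.
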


\begin{proof}
Our proof is computational, and Magma programs for carrying out the computation
are available at \url{https://github.com/everetthowe/many-maps}.

We narrow down our possibilities by using a weakened form of
Corollary~\ref{C:alphabeta}. Given two elliptic curves $E$ and~$F$, if there
exists an isomorphism $\psi\colon E[2]\to F[2]$ such that $E$, $F$, and $\psi$
are as in Proposition~\ref{P:structure}, then for every $n>1$ there exists an
endomorphism $\alpha_n$ of $E$ and a homomorphism $\beta_n\colon E\to F$ such 
that $2n = \deg\alpha_n + \deg\beta_n$, and such that~\eqref{EQ:antiisometry} 
holds. It follows that~\eqref{EQ:kernels} also holds, and this is the weaker
condition that we will check.

Given the $j$-invariants of $E$ and $F$, we can use the classical modular
polynomials $\Psi_m$ to determine whether there are endomorphisms of $E$, and 
homomorphisms $E\to F$, of any given (small) degree and with kernels containing
a given number of $2$-torsion points. The polynomial $\Psi_m$ has the property 
that there is a cyclic degree-$m$ isogeny between two elliptic curves with 
$j$-invariants $j_1$ and $j_2$ if and only if $\Psi_m(j_1,j_2) = 0$. Every 
isogeny $\beta$ can be factored into a cyclic isogeny composed with 
multiplication by a rational integer, and $\#(\ker\beta)[2]$ is determined by
the parity of the degree of the cyclic isogeny and the parity of the rational
integer.

To prove the proposition, we run through all $(\Delta,p)$ pairs listed in 
Proposition~\ref{P:Deltap}. For each $\Delta$, we construct the number field $K$
that contains the $j$-invariants of the elliptic curves $E$ whose endomorphism
rings have discriminant~$\Delta$; this is simply the number field defined by
the Hilbert class polynomial for $\Delta$, whose roots are precisely the
$j$-invariants in question. Then, for one such root $j_E$, we use the classical
modular polynomial $\Psi_p$ to construct an extension $L$ of $K$ that contains
the $j$-invariants $j_F$ of the elliptic curves $F$ that are $p$-isogenous
to~$E$.

For each such pair $(j_E, j_F)$, we use the method sketched above to compute the
set $S_E$ of all pairs $(m,d)$ of integers such that there is an endomorphism of
$E$ of degree $m$ and with kernel containing exactly $d$ points of order~$2$, 
for $m\le 62$. We compute the analogous set $S_F$ corresponding to homomorphisms 
$E\to F$. Then, for every $n$ from $2$ to $31$, we check to see whether we can 
find an $(m_1,d_1)\in S_E$ and an $(m_2,d_2)\in S_F$ with $d_1 = d_2$ and with
$m_1 + m_2 = 2n$.

For all of the pairs $(j_E,j_F)$ that meet this requirement, we output the
discriminants of the endomorphism rings of $E$ and $F$, and we note whether
$j_E = j_F$. (We can compute the discriminant of $\End F$ by finding the 
discriminant whose Hilbert class polynomial is equal to the minimal polynomial
of~$j_F$.) The computation gives us the list of discriminant pairs listed in
the proposition, and tells us whether $E\cong F$ when the discriminants are
equal.
\end{proof}

\begin{rem}
Magma includes many of the classical modular polynomials $\Psi_m$ for $m<62$ in
its standard distribution, but not all of them. It does include those for prime 
powers less than $61$. We obtained $\Psi_{61}$ from 
\href{https://math.mit.edu/~drew/}{Andrew Sutherland's web page}; it was
calculated using the methods of~\cite{BrokerLauterSutherland2012}. For $m$ with
more than one prime factor, we write $m = ab$ for coprime $a$ and~$b$, and note
that $\Psi(x,y)$ can be computed by taking the $z$-resultant of $\Psi(x,z)$ and
$\Psi(y,z)$.
\end{rem}

\begin{rem}
\label{R:59}
The quadratic order of discriminant $-59$ has class number~$3$, so there are 
three elliptic curves with this endomorphism ring; call them $E$, $F$, 
and~$F'$. Let $S_E$ and $S_F$ be as in the penultimate paragraph of the proof of
Proposition~\ref{P:Deltas}. We find that for every $n$ from $2$ to $31$, there
is an $(m_1,d_1)\in S_E$ and an $(m_2,d_2)\in S_F$ with $d_1 = d_2$ and with
$m_1 + m_2 = 2n$, so $E$ and $F$ meet the ``weaker condition'' mentioned in the
second paragraph of the proof. However, as we saw in the proof of 
Proposition~\ref{P:Deltap}, there do not exist $\alpha_3,\alpha_4\in \End E$
and $\beta_3, \beta_4\in \Hom(E,F)$ with $6 = \degree\alpha_3 +\degree\beta_3$
and $8 = \degree\alpha_4 +\degree\beta_4$ and with~\eqref{EQ:antiisometry} 
holding for both pairs. It is for this reason that we made a special argument
to exclude $p=5$, $\Delta = -59$ from the conclusion of
Proposition~\ref{P:Deltap}.
\end{rem}

Let $\CO$ be an imaginary quadratic order corresponding to one of the 
discriminants listed in Proposition~\ref{P:Deltas}, and let $\CO_\maxl$ be the 
maximal order containing it. We check that the class number of $\CO$ is at
most~$2$, and that if $\CO_\maxl \ne \CO$ then the class number of $\CO_\maxl$
is~$1$. This makes it particularly easy to find the elements of the fundamental
domains $\CF_1$ and $\CF_2$ that correspond to elliptic curves with one of these
endomorphism rings. Namely, if an order $\CO = \BZ[\theta]$ has class 
number~$1$, then the lattice $\langle 1,\theta\rangle$ has CM by $\CO$, and the 
image $\vartheta$ of $\theta$ in the upper half-plane gives rise to the unique 
elliptic curve over $\BC$ with CM by $\CO$. It is a simple matter to find the 
element $\tau$ of $\CF_1$ that is in the $\PSL_2(\BZ)$-orbit of $\vartheta$. The
elements of $\CF_2$ that correspond to the elliptic curve with CM by $\CO$ are
simply the images of $\tau$ under the M\"obius transformations listed in 
Figure~\ref{F:X2}.

If $\CO = \BZ[\theta]$ has class number~$2$, then in addition to the values of
$\tau$ in $\CF_1$ and $\CF_2$ specified above, we must find the $\tau$ 
corresponding to a nonprincipal ideal of~$\CO$. If $\CO$ is maximal, we find a 
nonprincipal prime ideal $I$ of $\CO$ lying over a rational prime~$p$ and we 
write $I = \langle p,\gamma\rangle$ for some $\gamma\in\CO$. Then we let 
$\vartheta$ be the image of $\gamma/p$ in the upper half-plane, and in the same 
way as above we find the elements $\tau$ of $\CF_1$ and $\CF_2$ that lie in the 
$\PSL_2(\BZ)$-orbit of $\vartheta$.

If $\CO$ has class number~$2$ and is nonmaximal, then as we noted above its
corresponding maximal order $\CO_\maxl$ has class number~$1$. Let $f$ be the
conductor of~$\CO$. Then the class group of $\CO$ is isomorphic to the quotient
of the group $U\colonequals (\CO_\maxl / f \CO_\maxl)^\times/(\BZ / f\BZ)^\times$
by the image of $\CO_\maxl^\times$ in~$U$ 
(see~\cite[Eq~(7.25), p.~116]{Cox2022}). If $\gamma\in\CO_\maxl$ represents
a nontrivial element of this quotient group, then we let $\vartheta$ be the 
image of $\gamma/f$ in the upper half-plane, and proceed as before.

We are now in a position to compute the values of $\tau$ and $\sigma$ such that
the period lattice~\eqref{EQ:periodmatrix} with 
polarization~\eqref{EQ:polarization} corresponds to the Jacobian of a curve with
maps of every degree $n>1$ to the elliptic curve corresponding to the element
$\tau$ of~$\CF_1$.

\begin{proposition}
\label{P:sigmatau}
The pairs $(\tau,\sigma)$ listed in Table~\textup{\ref{table:data}} are exactly
the elements of $\CF_1\times \CF_2$ such that the period 
matrix~\eqref{EQ:periodmatrix} and polarization~\eqref{EQ:polarization} 
correspond to a curve from Theorem~\textup{\ref{T:main}} whose associated 
elliptic curve corresponds to $\tau\in\CF_1$.
\end{proposition}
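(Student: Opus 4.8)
The strategy is essentially bookkeeping built on top of Propositions~\ref{P:structure}, \ref{P:Deltap}, and~\ref{P:Deltas}, together with Corollaries~\ref{C:alphabeta} and~\ref{C:alphabeta2} and the period-matrix recipe in Remark~\ref{R:periodstructure}. First I would fix one of the eighteen discriminant pairs $(\Delta_E,\Delta_F)$ from Proposition~\ref{P:Deltas}. For each such pair there are only finitely many curves $E$ with $\End E$ of discriminant $\Delta_E$ (class number at most $2$, so at most two of them), and for each $E$ there are finitely many curves $F$ with $\End F$ of discriminant $\Delta_F$ admitting an isogeny to $E$ of the relevant degree $p$ coming from Proposition~\ref{P:Deltap}. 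Proposition~\ref{P:Deltas} furthermore tells us whether $E\cong F$ in the equal-discriminant cases, which pins down $F$ relative to $E$ precisely. So the list of candidate pairs $(E,F)$ is explicit and short.

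Next, for each candidate $(E,F)$ I would run through the six isomorphisms $\psi\colon E[2]\to F[2]$. By the converse half of Proposition~\ref{P:structure}, each $\psi$ that is not the restriction of an isomorphism $E\to F$ yields a genus-$2$ curve $C$ with a degree-$2$ map $\varphi\colon C\to E$ giving rise to $(F,\psi)$; the excluded $\psi$'s produce no curve and are discarded. For the surviving data $(E,F,\psi)$, Corollary~\ref{C:alphabeta2} describes exactly which degrees $n$ arise as degrees of maps $C\to E$: namely $n=(\deg\alpha+\deg\beta)/2$ for pairs $(\alpha,\beta)\in\End E\times\Hom(E,F)$ with $\beta(P)=\psi(\alpha(P))$ on all of $E[2]$. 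Since $\End E$ and $\Hom(E,F)$ are explicitly computable $\BZ$-modules of rank $2$, I can enumerate all such $(\alpha,\beta)$ with bounded degree, hence compute the set of representable $n$; equivalently, using Corollary~\ref{C:alphabeta}, I get the quaternary quadratic form on $\Hom(C,E)$ given by the degree map. A curve $C$ satisfies condition~(3) of Theorem~\ref{T:main} if and only if this form represents every integer $n>1$; checking $2\le n\le 31$ is a finite necessary condition, and—as asserted in the discussion following Proposition~\ref{P:Deltap} and confirmed by Proposition~\ref{P:forms}—precisely the data surviving this check give forms equivalent to $q_1,\dots,q_4$, which do represent all $n>1$. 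Thus the surviving $(C,E)$ are exactly the pairs of Theorem~\ref{T:main}.

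Finally, for each surviving $(E,F,\psi)$ I would translate the data into the coordinates of Figures~\ref{F:X1} and~\ref{F:X2} using Remark~\ref{R:periodstructure}: write the period lattice of $E$ as $\langle 1,\tau\rangle$ with $\tau\in\CF_1$, then find the unique $\sigma\in\CF_2$ such that $\langle 1,\sigma\rangle$ is a period lattice for $F$ and $\psi$ sends $1/2\mapsto\sigma/2$ and $\tau/2\mapsto 1/2$ modulo the respective lattices. Concretely, since every curve involved has CM, $\tau$ is found by reducing the appropriate CM point (coming from $\CO$ itself or from a non-principal prime ideal in the class-number-$2$ case) into $\CF_1$, and $\sigma$ is then one of the six images of the analogous CM point under the Möbius transformations labeling the tiles of $\CF_2$ in Figure~\ref{F:X2}, selected by the $2$-torsion compatibility condition on $\psi$. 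Carrying this out for every candidate produces exactly the twenty rows of Table~\ref{table:data}.

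The main obstacle is not any single hard idea but the risk of an incomplete enumeration: one must be sure that \emph{all} candidate $(E,F)$, \emph{all} six $\psi$ for each, and enough bounded-degree $(\alpha,\beta)$ pairs have been considered, so that no curve satisfying condition~(3) is missed and no spurious one is admitted. This is why the argument is organized as a finite computation, with the supporting \texttt{Magma} code referenced in Section~\ref{S:enumerating}; the statement that the surviving forms are exactly $q_1,\dots,q_4$, and that there are exactly twenty $(\tau,\sigma)$ pairs, is the output of that computation, and Proposition~\ref{P:forms} closes the loop by confirming that these four forms really do represent every integer $n>1$.
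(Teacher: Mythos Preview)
Your proposal is correct and follows essentially the same approach as the paper: enumerate the finitely many candidate triples $(E,F,\psi)$ coming from Proposition~\ref{P:Deltas}, compute the degree form on $\Hom(C,E)$ as a quaternary quadratic form, discard the cases that fail to represent all integers in $\{2,\ldots,31\}$, verify that the survivors give forms equivalent to $q_1,\ldots,q_4$, and record the resulting $(\tau,\sigma)$ in $\CF_1\times\CF_2$. The only cosmetic difference is that the paper parametrizes directly by $(\tau,\sigma)$ from the outset and computes the degree form via the sublattice $\Mu=\Lambda\cap\tau^{-1}\Lambda$ and the polarization pairing~\eqref{EQ:polarization}, whereas you describe the same form through the $(\alpha,\beta)$ data of Corollaries~\ref{C:alphabeta}--\ref{C:alphabeta2}; these are equivalent bookkeeping choices.
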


\begin{proof}
Our proof is computational, and Magma programs for carrying out the computation
are available at \url{https://github.com/everetthowe/many-maps}.

For every pair $(\Delta_E, \Delta_F)$ in Proposition~\ref{P:Deltas}, we let
$K = \BQ(\sqrt{\Delta_E}) \cong \BQ(\sqrt{\Delta_F})$ and we specify an
embedding $K$ into $\BC$ by choosing one of the square roots of $\Delta_E$ in
$K$ and declaring that it has positive imaginary part. Then we compute the 
values of $\tau$ in $K \cap \CF_1$ corresponding to $\Delta_E$ as described
above, and the values of $\rho \in K\cap \CF_1$ corresponding to $\Delta_F$. For
each $\rho$ we compute its images $\sigma$ in $K\cap \CF_2$; but if 
$\Delta_E=\Delta_F$ and $-\Delta_E\in\{3,4,7,8,11,12,16,19,20\}$ we only do so
when $\rho=\tau$, and if $\Delta_E = \Delta_F$ and $-\Delta_E\in\{24,36\}$ we 
only do so when $\rho\ne\tau$.

For each pair $(\tau, \sigma)$ we obtain, we perform the following calculation.
Let $\Lambda$ be the lattice in $K^2\subset \BC^2$ generated by the vectors 
\[
b_1 \colonequals (     1,        0)\,, \qquad
b_2 \colonequals (     0,        1)\,, \qquad
b_3 \colonequals (\tau/2,      1/2)\,, \qquad
b_4\colonequals  (   1/2, \sigma/2)\,.
\]
We write elements of $\Lambda$ as $\BZ$-linear combinations of these vectors.

Let $\Lambda_E$ be the lattice in $K\subset \BC$ generated by $1$ and $\tau$. 
If $\Lambda$, with its principal polarization~\eqref{EQ:polarization}, is the
Jacobian of a genus-$2$ curve $C$, then the maps from $C$ to $E$ that take a 
fixed base point to the origin of $E$ correspond to embeddings of $\Lambda_E$
into $\Lambda$. Such an embedding is determined by where it sends 
$1\in \Lambda_E$, and the image $x\in\Lambda$ must have the property that
$\tau x\in\Lambda$; in other words, $x$ must be an element of the sublattice
$\Mu\colonequals \Lambda \cap \tau^{-1}\Lambda$ of $\Lambda$.

If $\Lambda$ is the Jacobian of a curve $C$, then the degree of the map from $C$ 
to $E$ corresponding to an element $x$ of $\Mu$ is equal to the value of the
pairing~\eqref{EQ:polarization} applied to $\tau x$ and $x$. Write 
$\langle\tau x, x\rangle$ for the value of this pairing. If we compute four
elements $c_1, c_2, c_3, c_4$ of $\Lambda$ that generate~$\Mu$, we can then 
compute the Gram matrix of the quadratic form $q$ on $\BZ^4$ that sends a vector
$(n_1,n_2,n_3,n_4)$ to $\langle\tau x, x\rangle$, where 
$x = n_1 c_1 +\cdots + n_4 c_4$.

Given the Gram matrix of $q$, we can compute all of $v\in\BZ^4$ with 
$q(v)\le 31$. Let $S$ be this set of vectors, and let $V$ be the set 
$\{q(v) : v \in S\}$. If $1\in V$ then we know that $\Lambda$ is not the
Jacobian of a genus-$2$ curve $C$, because there can be no degree-$1$ map from
a genus-$2$ curve to $E$. On the other hand, if $V$ does not contain $1$, then
$\Lambda$ does correspond to a curve $C$, and if $V$ does not contain every
integer between $2$ and $31$ then $C$ certainly does not have maps of every 
degree to $E$.

Thus, we can remove from consideration every pair $(\tau, \sigma)$ for which the
set $V$ is not equal to $\{2,\ldots,31\}$.

On the other hand, for every pair $(\tau, \sigma)$ for which the set $V$ is
equal to $\{2,\ldots,31\}$, we can check to see whether the quadratic form $q$
is isomorphic to one of the forms $q_1$, $q_2$, $q_3$, or $q_4$. When we perform
this calculation, we find that in fact every such $q$ \emph{is} isomorphic to
one of the forms~$q_i$. So for each such $q$, we output the values 
$\Delta_E$, $\Delta_F$, $\tau$, $\sigma$, and $i$.

When we do so, we find that the output matches Table~\ref{table:data}. This
proves Theorem~\ref{T:main}.
\end{proof}

\section{Models of the curves}
\label{S:models}
Given values $\Delta_E$, $\Delta_F$, $\tau$, and $\sigma$ from a row of
Table~\ref{table:data}, let $K$ be a number field in which the Hilbert class
polynomials of $\Delta_E$ and $\Delta_F$ split. We can compute models over $K$
for elliptic curves $E$ and $F$ with period lattices $\Lambda_E$ and $\Lambda_F$
homothetic to $\langle 1, \tau\rangle$ and $\langle 1, \sigma\rangle$, 
respectively. Let $L$ be an extension of $K$ over which the $2$-torsion points
of $E$ and $F$ are rational. By complex approximation we can identify the 
$2$-torsion points $P_1$, $P_\tau$, and $P_{1+\tau}$ of $E(L)$ corresponding to
the values $1/2$, $\tau/2$, and $(1+\tau)/2$ modulo $\Lambda_E$, and similarly
we can compute the analogously-defined $2$-torsion points $Q_1$, $Q_\sigma$, and
$Q_{1+\sigma}$ on~$F(L)$. Let $\psi$ be the isomorphism $E[2]\to F[2]$ that
sends $P_1$ to $Q_\sigma$ and $P_\tau$ to $Q_1$. Then we can use the formulas
from~\cite[\S2]{HoweLeprevostPoonen2000} to compute a curve $C$ over $L$ that
corresponds to $E$, $F$, and $\psi$ as in Proposition~\ref{P:structure}.

Once we have a curve $C$ in hand, we can try to find a twist of it that is
rational over its field of moduli and that has a relatively simple defining
equation. (``Relatively simple'' is an inexact expression, so creating these 
models is not an exact science.)

By this method, we have found the models for the curves in Theorem~\ref{T:main}
that we present in Table~\ref{table:equations}. Since we define the curves in 
terms of elements of abstract number fields, and not by specific complex
numbers, each of the equations corresponds to several curves from
Theorem~\ref{T:main} --- namely, the ones with the same values of $\Delta_E$
and~$\Delta_F$.

\begin{table}[t]
%
\begin{center}
\begin{tabular}{rrl}
\toprule
$\Delta_E$ & $\Delta_F$ &  Polynomial $f$\\
\midrule
$-4$  & $-100$ & $x^6 - 3x^4 + (2 + r^{24})x^2 - r^{24}$                              \\
               && where $r^2 - r - 1 = 0$                                             \\
\\               
$-8$  & $-32$  & $x^6 + (32r^3 - 31r^2 + 8r - 18)x^4 + (8r^3 - 8r^2 + 16r)x^2 + 8$    \\
               && where $r^4 - 2r^2 - 1 = 0$                                          \\
\\
$-8$  & $-72$  & $x^6 + (-2r - 33)x^4 + (-116r - 189)x^2 - 2r + 5$                    \\
               && where $r^2-6 = 0$                                                   \\
\\
$-12$ & $-3$   & $x^6 + (3r - 6)x^4 + (-12r + 9)x^2 + 4$                              \\
               && where $r^2 + 1 = 0$                                                 \\
\\
$-16$ & $-4$   & $x^6 + (6r + 9)x^4 + (72r - 30)x^2 + 16r$                           \\
               && where $r^2-2 = 0$                                                   \\
\\
$-20$ & $-20$  & $x^5 + 5x^3 + 5x$                                                    \\
\\
$-24$ & $-24$  & $x^6 - 21x^4 + 48x^3 - 45x^2 + 48x - 23$                             \\
\\
$-36$ & $-36$  & $x^5 - (8r - 12)x^4 - (73r + 6)x^3 - (168r + 252)x^2 - (72r + 423)x$ \\
               && where $r^2 + 3 = 0$                                                 \\
\bottomrule
\end{tabular}
\medskip
\caption{For each pair $\Delta_E$, $\Delta_F$, we give a polynomial $f$ such
that $y^2 = f$ is an equation for the corresponding curves $C$ from 
Table~\ref{table:data}}
\label{table:equations}
\end{center}
\end{table}

\begin{example}
For the first curve on the list, we will present a basis for the rank-$4$
$\BZ$-module of maps from $C$ to $E$ that take the point $(1,0)$ to the origin
of~$E$. We give the reasonably simple formulas here; Magma code that
verifies~\eqref{EQ:example} can be found in the GitHub repository mentioned in
Section~\ref{S:enumerating}.

Let $i$ and $s$ satisfy $i^2 = -1$ and $s^2 = 5$, and let $r = (s+1)/2$, so that
$r^2 - r - 1= 0$. Our curve $C$ is
\[
y^2 = x^6 - 3x^4 + (2 + r^{24})x^2 - r^{24},
\]
and we let $E$ be the elliptic curve
\[
w^2 = z^3 + 9sz
\]
with $j$-invariant $1728$.

Define rational functions $P_2$, $Q_2$, $P_3$, and $Q_3$ by
\begin{align*}
P_2 & \colonequals \frac{36 s r^6}{x^2-1}                         &  
Q_2 & \colonequals \frac{-18 s r^3}{(x^2-1)^2}                    \\
P_3 & \colonequals \frac {-3 r (x + 1)  (x^2 - 6r^3 x + r^{12})}
                          {(x - 1)(sx + r^6)^2}                   &
Q_3 & \colonequals \frac{ -9r  ((1-2s)x^2 - 2x - r^6)}
                         {(x - 1)^2  (sx + r^6)^3}
\end{align*}
and define maps from $C$ to $E$ by
\begin{align*}
\varphi_1\colon \quad (x,y) &\to (\phantom{-}P_2, \phantom{i} y Q_2)\\
\varphi_2\colon \quad (x,y) &\to (          -P_2,           i y Q_2)\\
\varphi_3\colon \quad (x,y) &\to (\phantom{-}P_3,           i y Q_3)\\
\varphi_4\colon \quad (x,y) &\to (          -P_3, \phantom{i} y Q_3).
\end{align*}
These maps all send the point $(1,0)$ on $C$ to the identity of $E$.
Then one can check that for integers $a,b,c,d$, we have
\begin{equation}
\label{EQ:example}
\degree (a\varphi_1 + b\varphi_2 + c\varphi_3 + d\varphi_4)
= 2a^2 + 2b^2 + 3c^2 + 3d^2 + 2ad + 2bc 
= q_2(a,b,c,d)\,,
\end{equation}
where $q_2$ is the quadratic form given in the introduction.
\end{example}

\begin{rem}
The code in our GitHub repository also includes similar presentations of the
curves and maps for the cases $\Delta_E = \Delta_F = -20$ and 
$\Delta_E = \Delta_F = -36$. We hope to add more examples as time allows.
\end{rem}

\begin{rem}
The examples with $\Delta_E = \Delta_F$ (rows 13 through 20 in
Table~\ref{table:data}) have a remarkable property: Each $C$ has maps of every
degree to \emph{two different} (but Galois conjugate) elliptic curves. If the
existence of any $(C,E)$ pairs as in Theorem~\ref{T:main} is surprising, then
surely it is even more surprising to find curves $C$ with more than one choice
for~$E$!
\end{rem}

\section{Quaternary quadratic forms representing \texorpdfstring{\\}{}
         all integers greater than \texorpdfstring{$1$}{1}}
\label{S:forms}

In this section we will prove Proposition~\ref{P:forms}. It is easy to check
that each of the four forms $q_1$, $q_2$, $q_3$, $q_4$ represents the 
integer~$4$, and it is clearly the case that if a form represents $n$ then it
also represents~$4n$.  Thus, it will suffice for us to show that each of the
four forms represents every integer $n>1$ that is not a multiple of~$4$. We
give a separate argument for each of the four forms.

\subsection{The quadratic form \texorpdfstring{$q_1$}{q1}}
Recall that 
\[ 
q_1 = 2w^2 + 3x^2 + 3y^2 + 4z^2 + 2xy\,. 
\]
Suppose $n>1$ is not a multiple of~$4$. Let $d$ be the integer defined by
\[
d = \begin{cases}
     0 & \text{if $n\equiv 2, 3, 5\bmod 8$}\\
     1 & \text{if $n\equiv 1, 6, 7\bmod 8$\,.}
    \end{cases}
\]
Then $n-4d^2$ is a positive integer and $n-4d^2\equiv 2, 3, 5\bmod 8$; a result
of Dickson~\cite[Theorem~VI]{Dickson1927} then shows that we may write 
\[
n - 4d^2 = a^2 + 2b^2 + 2c^2
\]
for some integers $a,b,c$. By considering the right-hand side of this equality 
modulo~$8$, we check that $b$ and $c$ cannot both have the opposite parity
to~$a$, so by switching $b$ and $c$ if necessary we can ensure that  
$a\equiv b\bmod 2$. Now we simply set
\begin{align*}
w &= c                  & y &= (b-a)/2  \\
x &= (a+b)/2            & z &= d
\end{align*}
and note that $n = q_1(w,x,y,z)$.

\subsection{The quadratic form \texorpdfstring{$q_2$}{q2}}
Recall that 
\[
q_2 = 2w^2 + 2x^2 + 3y^2 + 3z^2 + 2wz + 2xy\,.
\]
Suppose $n>1$ is not a multiple of~$4$. Let $d$ be the integer defined by
\[
d = \begin{cases}
     0 & \text{if $3n\equiv 1, 2, 5, 6, 7\bmod 8$}\\
     1 & \text{if $3n\equiv 3\bmod 8$\,.}
\end{cases}
\]
Then $3n-5d^2$ is a positive integer and $3n-5d^2\equiv 1, 2, 5, 6, 7\bmod 8$.
Another result of Dickson~\cite[Theorem~15]{Dickson1926} shows that we may write 
\[
3n - 5d^2 = a^2 + b^2 + 5c^2
\]
for some integers $a,b,c$. It follows that 
\[
a^2 + b^2 \equiv c^2 + d^2 \bmod 3\,,
\]
so by exchanging $a$ and $b$, if necessary, we may assume that 
$a^2\equiv c^2\bmod 3$ and $b^2\equiv d^2\bmod 3$, and by replacing $a$ and $b$
with their negatives, if necessary, we may assume that in fact 
$a\equiv c\bmod 3$ and $b\equiv d\bmod 3$. Now let
\begin{align*}
w &= c  & y &= (b-d)/3\\
x &= d  & z &= (a-c)/3
\end{align*}
and note that $n = q_2(w,x,y,z)$.

\subsection{The quadratic form \texorpdfstring{$q_3$}{q3}}
Recall that 
\[
q_3 = 2w^2 + 3x^2 + 3y^2 + 4z^2 + 2wx + 2wy + 2xz + 2yz\,.
\]
Let $n>1$ be an integer that is not a multiple of~$4$. Let $d$ be the integer 
defined by
\[
d = \begin{cases}
     0 & \text{if $n\equiv 2, 3, 6, 7\bmod 8$}\\
     1 & \text{if $n\equiv 1, 5\bmod 8$\,.}
\end{cases}
\]
Then $n-3d^2$ is a positive integer and $n-3d^2\equiv 2, 3, 6, 7\bmod 8$. Yet
another result of Dickson~\cite[Theorem~XI]{Dickson1927} shows that we may write
\[
n - 3d^2 = a^2 + 2(b^2 + bc + c^2)
\]
for some integers $a, b, c$.  Now, $b$ and $c$ cannot both be even, because in 
that case we would have $n-3d^2 \equiv a^2 \bmod 8$, while we know that $n-3d^2$
is not congruent to a square modulo~$8$. By replacing $(b,c)$ with $(b+c,-c)$ if
necessary, we can ensure that one of the numbers $b,c$ is even and the other 
odd. Then by switching $b$ and~$c$, if
necessary, we can ensure that $a + c + d$ is even. Now set
\begin{align*}
w &= b + (a+c+d)/2 & y &= c - (a+c+d)/2\\
x &=   - (a+c+d)/2 & z &= d           
\end{align*}
and note that $n = q_2(w,x,y,z)$.

\subsection{The quadratic form \texorpdfstring{$q_4$}{q4}}
Recall that 
\[
q_4 = 2w^2 + 3x^2 + 4y^2 + 6z^2 - 2wx + 2wz + 2xy + 4yz\,.
\]
Our proof in this case depends on the parity of $n$. First let us suppose that
$n > 1$ is an even integer that is not a multiple of~$4$. Then $n$ is
congruent to $2$ or $6$ modulo~$8$, so Legendre's three-square 
theorem~\cite[pp.~398--399]{Legendre1798} says that we may write
\[
n = a^2 + b^2 + c^2
\]
for some integers $a,b,c$. By permuting these integers and changing their signs,
if necessary, we may assume that $a\equiv b\bmod 3$. Then we may set
\begin{align*}
w &= (2a + b)/3     & y &= (a - b + 3c)/6 \\
x &=  0             & z &= (b-a)/3\,.           
\end{align*}
These numbers are integers; the only thing that may not be clear immediately is 
whether $y$ is integral at $2$, but that can be verified by noting that
\[
a - b + 3c \equiv a + b + c 
           \equiv a^2 + b^2 + c^2  
           \equiv n
           \equiv 0\bmod 2\,.
\]
One can easily check that $n = q_4(w,x,y,z).$

Now suppose that $n>1$ is odd. Let $d=3$, and note that the three-square theorem
shows that we may write
\[
4n - d^2 = a^2 + b^2 + c^2
\]
for some integers $a,b,c$. Considering this equality modulo~$4$, we see that 
$a$, $b$, and $c$ must all be odd, and by permuting them and changing their
signs (if necessary) we can assume that $a\equiv b\bmod 3$ and 
$a \equiv b + c + d\bmod 4$.  Now set
\begin{align*}
w &= (2a + b + d)/6 & y &= (a - b + 3c - d)/12 \\
x &= d/3            & z &= (b-a)/6\,.            
\end{align*}
Our assumptions on $a$, $b$, $c$, and $d$ show that $w,x,y,z$ are integers, and
it is easy to check that $n = q_4(w,x,y,z).$

This proves Proposition~\ref{P:forms}.\qed

\section{Empty intersections of Humbert surfaces}
\label{S:intersection}

A point in the moduli space $\CM_2$ of genus-$2$ curves lies on the Humbert
surface $H_{n^2}$ if and only if the curve it represents has a minimal map of 
degree~$n$ to an elliptic curve. Since a map $C\to E$ of prime degree is 
necessarily minimal, Theorem~\ref{T:main} shows that the intersection 
$\cap_{p} H_{p^2}$ is nonempty, where $p$ ranges over the set of all primes.
(Keep in mind that the Humbert surface $H_{n^2}$ has roughly $n^3$ components,
so one's geometric intuition about intersections of surfaces in threefolds must 
be tempered with some combinatorics to get a full picture of the situation.)

This leads to the motivation behind Theorems~\ref{T:intersection1} 
and~\ref{T:intersection2}: Can we show that there are finite collections of
Humbert surfaces $H_{n^2}$ with trivial intersection? And, hearkening back to 
our geometric intuition, can we find \emph{four} Humbert surfaces $H_{n^2}$ whose 
intersection is empty, as one would expect if we were dealing with a random 
collection of irreducible surfaces? Or, does the combinatorics of their many
components interfere with this seemingly reasonable expectation? Work of Kani
sheds light on the situation, and guides our proofs of these two theorems.

Kani~\cite[Theorem~20]{Kani2019} shows that to every genus-$2$ curve $C$ one can
associate a positive definite quadratic form $q_C$ in at most $3$ variables,
with integer coefficients, known as the \emph{refined Humbert invariant}
of~$C$. The invariant $q_C$ has the property that $C$ has a minimal map of 
degree $n$ to some elliptic curve if and only if $q_C$ represents~$n^2$ 
primitively; that is, if and only if there is an integer vector $v$ with coprime
entries such that $q_C(v) = n^2$. Kani also shows~\cite{Kani2025preprint} that
a positive-definite ternary quadratic form with integer coefficients that is
\emph{primitive} --- that is, one whose coefficients generate the unit ideal ---
occurs as refined Humbert invariant if and only if it does not take on any
values that are $2$ or $3$ modulo~$4$. Theorems~\ref{T:intersection1}
and~\ref{T:intersection2} are therefore corollaries of the following proposition
concerning ternary quadratic forms.

\begin{proposition}
\label{P:ternary}
Let $q$ be a primitive positive definite ternary quadratic form with integer
coefficients that does not represent~$1$ and that does not represent any integer
congruent to $2$ or $3$ modulo~$4$. Then\textup{:}
\begin{enumerate}
\item \label{T1}
      For some $n$ in the set 
      $D_1\colonequals\{2,3,4,5,6,7,8,9,11,12,13,19,31,59\}$, the form $q$ does
      not primitively represent~$n^2$.
\item \label{T2}
      Let $N$ be as in Theorem~\textup{\ref{T:intersection2}} and let $k$ be
      a positive integer. For some $n$ in the set 
      $D_2\colonequals\{2,3,13,kN\}$, the form $q$ does not primitively 
      represent~$n^2$.
\end{enumerate}
\end{proposition}

\begin{proof}
Let $D$ be either of the sets $D_1$ and $D_2$. Suppose there were a positive
definite integer ternary form $q$ that does not represent $1$, and that does not
represent any integer that is $2$ or $3$ modulo~$4$, but that primitively
represents all $n^2$ with $n\in D$. In particular, this $q$ must primitively
represent $4$, $9$, and $169$. We write $q$ in Minkowski-reduced form as
$ax^2 + by^2 + cz^2 + 2rxy + 2sxz + 2tyz$. (Note that a priori, $r$, $s$, and
$t$ may be half-integers.) 

The condition that $q$ only represent integers that are $0$ and $1$ mod~$4$ 
implies that $a$, $b$, and $c$ must be $0$ or $1$ mod~$4$. By considering the
values of $q$ at triples $(x,y,z)$ with all entries in $\{-1,0,1\}$ we find that
the same congruence condition implies that the half-integers $r$, $s$, and $t$ 
are in fact integers.

By replacing some of the variables with their negations, we can assume that
$r\ge 0$ and $s\ge 0$, and if $rs=0$ then we may assume that $t\ge0$. And 
finally, since the form is reduced, from~\cite[Lemma~1.2, p.~257]{Cassels1978}
we see that 
\begin{align*}
2r &\le a      & 2t &\le a+b+2r-2s        &  2t &\ge -b        \\
2s &\le a      & 2t &\le a + b - 2r + 2s  &  2t &\ge 2r+2s-a-b\,.\\
2t &\le b     
\end{align*}
  
We know from~\cite[Satz~7, p.~281]{vanderWaerden1956} that the sequence
$(a,b,c)$ is the sequence of successive minima for~$q$. Since $q$ represents $4$
but does not represent~$1$, we must have $a=4$. Since $4x^2$ does not 
represent~$9$, we must have $b\in\{4, 5, 8, 9\}$.

We check that for $b\in\{4, 5, 8, 9\}$ and $r\in\{0,1,2\}$, if the quadratic
form $ax^2 + by^2 + 2rxy$ does not represent integers that are $2$ or $3$
mod~$4$ then it also does not represent $169$. Since we are assuming that $q$
does represent $169$, the third succesive minimum of $q$ must be at most $169$;
in other words, $c\le 169$.

Now we can enumerate all of the values of $a$, $b$, $c$, $r$, $s$, and $t$ 
meeting the conditions above. For each resulting ternary form~$q$, we check
whether it represents all of the integers in $D_1$ and $D_2$ as follows.

The elements of $D_1$ are small enough that we can simply check by brute force
whether $q$ primitively represents all of them. Magma code in our GitHub
repository carries out this computation, and verifies that none of the $q$ we
must consider represents every element of $D_1$. This proves
statement~\eqref{T1}.

To prove statement~\eqref{T2}, we must show that none of the $q$ produced above
primitively represents $(kN)^2$. To accomplish this, it will suffice for us to
produce, for each of the $q$ we must consider, a prime power $m$ that divides
$N^2$ such that $q$ does not primitively represent any multiple of~$m$.

Lemma~\ref{L:noprimitive}, below, gives a quick method that often can find such
a prime power~$m$. We apply the lemma to each of the $q$ we must consider. If 
the lemma does not provide us with such an~$m$, the brute force method sketched 
in Remark~\ref{R:p=2} produces an $m$ that divides $2^{12}$ such that $q$ does
not primitively represent any multiple of~$m$. Since $2^{12}$ divides $N^2$, 
this computation verifies statement~\eqref{T2} of the proposition.
\end{proof}

\begin{lemma}[Compare to \protect{\cite[Prop. 3.1]{EarnestGunawardana2021}}]
\label{L:noprimitive}
Let $q = ax^2 + by^2 + cz^2 + 2rxy + 2sxz + 2tyz$ be a nondegenerate quadratic
form with integer coefficients, let $M$ be the matrix
\[
\begin{pmatrix}
a & r & s \\
r & b & t \\
s & t & c
\end{pmatrix},
\]
and for $1\le i\le 3$ let $M_i$ be the upper left $i\times i$ submatrix of~$M$.
Let $A = \det M_1$, $B = (\det M_2)/(\det M_1)$, and
$C = (\det M_3)/(\det M_2)$. Suppose that $p$ is an odd prime such that $A$,
$B$, $C$, $r/a$, $s/a$, and $(rs-at)/(ab-r^2)$ are all integral at~$p$, and 
suppose that the parities of the $p$-adic valuations of $A$, $B$, and $C$ are
not all equal. Finally, suppose that $-A/B$, $-A/C$, and $-B/C$ are all
nonsquares in $\BQ_p$.

If $m$ is an integer whose $p$-adic valuation is greater than that of $A$, $B$,
and $C$, then $q$ does not primitively represent $m$.
\end{lemma}

\begin{proof}
We note that if we set 
\[ u \colonequals x + \Bigl(\frac{r}{a}\Bigr) y + \Bigl(\frac{s}{a}\Bigr)z\,,
   \qquad 
   v \colonequals y - \Bigl(\frac{rs-at}{ab-r^2}\Bigr) z\,,
   \qquad 
   w \colonequals z\,,\]
then $q = Au^2 + Bv^2 + Cw^2$, and our assumptions on the integrality of $A$, 
$B$, $C$, $r/a$, $s/a$, and $(rs-at)/(ab-r^2)$ show that $q$ is in fact
isomorphic over $\BZ_p$ to the diagonal form 
$q'\colonequals Au^2 + Bv^2 + Cw^2$. Therefore it suffices to show that $q'$
does not primitively represent~$m$.

Reorder $A$, $B$, and $C$ so that $A$ and $B$ have $p$-adic valuations of the
same parity. Suppose that $q'$ represents $m$ over $\BZ_p$, and let $u$, $v$,
and $w$ be elements of $\BZ_p$ with $m = Au^2 + Bv^2 + Cw^2$. 

We claim that each summand in this last equality has valuation at least that
of~$m$. For suppose not. Because our valuation is non-Archimedean, if any of the
summands has valuation less than that of~$m$, then at least two of them must
have that same valuation, in order for there to be cancellation that will
increase the valuation of the sum. Since $C$ has different parity of valuation
than $A$ and~$B$, it must be that $Au^2$ and $Bv^2$ have equal valuation, say
equal to $e$, and that $Au^2 + Bv^2$ has valuation greater than~$e$. The
$p$-adic expansions of the summands must then look like
\begin{align}
\notag    Au^2 &= \phantom{-}\alpha p^e + \text{higher-order terms}\\
\notag    Bv^2 &=           -\alpha p^e + \text{higher-order terms}\\
\intertext{with $\alpha$ a unit. But then}
\label{EQ:cong} -\Bigl(\frac{Au^2}{Bv^2}\Bigr) &= 1 + \text{higher-order terms},
\end{align}
which is enough to show that $-(Au^2)/(Bv^2)$ is a square in $\BZ_p$, because
$p$ is odd. This contradicts our assumption that $-A/B$ is not a square.
Therefore, $Au^2$, $Bv^2$, and $Cw^2$ each have valuation greater than that
of~$m$, so each of $u$, $v$, and $w$ has positive valuation, so they are not
coprime.
\end{proof}

\begin{rem}
\label{R:p=2}
The requirement in Lemma~\ref{L:noprimitive} that $p$ be odd is necessary in 
order for~\eqref{EQ:cong} to contradict the assumption that $-A/B$ is a
nonsquare. Rather than try to spell out a variation of the lemma for $p=2$, we
instead simply note that for small powers $m$ of $2$, we can check by brute
force whether we can have $q(x,y,z)\equiv 0 \bmod m$ for values of $x$, $y$, 
and~$z$ that are not all even. (The amount of brute force needed can be reduced
slightly by diagonalizing $q$ over $\BQ_2$; see the Magma code for details.)
\end{rem}

\bibliographystyle{hplaindoi}
\bibliography{manymaps}

@article{Bolza1886,
    AUTHOR = {Bolza, Oskar},
     TITLE = {Ueber die {R}eduction hyperelliptischer {I}ntegrale erster {O}rdnung
              und erster {G}attung auf elliptische durch eine {T}ransformation
              vierten {G}rades},
   JOURNAL = {Math. Ann.},
  FJOURNAL = {Mathematische Annalen},
    VOLUME = {28},
      YEAR = {1886},
    NUMBER = {2-3},
     PAGES = {447--456},
       URL = {https://eudml.org/doc/157259},
}

@article {Bolza1898a,
    AUTHOR = {Bolza, Oskar},
     TITLE = {Zur {R}eduction hyperelliptischer {I}ntegrale erster {O}rdnung
              auf elliptische mittels einer {T}ransformation dritten
              {G}rades},
   JOURNAL = {Math. Ann.},
  FJOURNAL = {Mathematische Annalen},
    VOLUME = {50},
      YEAR = {1898},
    NUMBER = {2-3},
     PAGES = {314--324},
      ISSN = {0025-5831,1432-1807},
   MRCLASS = {99-04},
  MRNUMBER = {1511001},
       DOI = {10.1007/BF01448072},
       URL = {http://eudml.org/doc/157885},
}

@article {Bolza1898b,
    AUTHOR = {Bolza, Oskar},
     TITLE = {Zur {R}eduction hyperelliptischer {I}ntegrale erster {O}rdnung
              auf elliptische mittels einer {T}ransformation dritten
              {G}rades},
   JOURNAL = {Math. Ann.},
  FJOURNAL = {Mathematische Annalen},
    VOLUME = {51},
      YEAR = {1898},
    NUMBER = {3},
     PAGES = {478--480},
      ISSN = {0025-5831,1432-1807},
   MRCLASS = {99-04},
  MRNUMBER = {1511037},
       DOI = {10.1007/BF01446475},
       URL = {http://eudml.org/doc/157927},
}

@article {BosmaCannonEtAl1997,
    AUTHOR = {Bosma, Wieb and Cannon, John and Playoust, Catherine},
     TITLE = {The {M}agma algebra system. {I}. {T}he user language},
      NOTE = {Computational algebra and number theory (London, 1993)},
   JOURNAL = {J. Symbolic Comput.},
  FJOURNAL = {Journal of Symbolic Computation},
    VOLUME = {24},
      YEAR = {1997},
    NUMBER = {3-4},
     PAGES = {235--265},
      ISSN = {0747-7171,1095-855X},
   MRCLASS = {68Q40},
  MRNUMBER = {1484478},
       DOI = {10.1006/jsco.1996.0125},
       URL = {https://doi.org/10.1006/jsco.1996.0125},
}

@article {Brioschi1891,
    AUTHOR = {Brioschi, Francesco},
     TITLE = {Sur la r\'eduction de l'int\'egrale hyperelliptique \`a{}
              l'elliptique par une transformation du troisi\`eme degr\'e},
   JOURNAL = {Ann. Sci. \'Ecole Norm. Sup. (3)},
  FJOURNAL = {Annales Scientifiques de l'\'Ecole Normale Sup\'erieure.
              Troisi\`eme S\'erie},
    VOLUME = {8},
      YEAR = {1891},
     PAGES = {227--230},
      ISSN = {0012-9593},
   MRCLASS = {99-04},
  MRNUMBER = {1508859},
       DOI = {10.24033/asens.357},
       URL = {https://doi.org/10.24033/asens.357},
}

@article {BrokerHoweLauterStevengagen2015,
    AUTHOR = {Br\"oker, Reinier and Howe, Everett W. and Lauter, Kristin E.
              and Stevenhagen, Peter},
     TITLE = {Genus-$2$ curves and {J}acobians with a given number of points},
   JOURNAL = {LMS J. Comput. Math.},
  FJOURNAL = {LMS Journal of Computation and Mathematics},
    VOLUME = {18},
      YEAR = {2015},
    NUMBER = {1},
     PAGES = {170--197},
      ISSN = {1461-1570},
   MRCLASS = {14H45 (11G15 11G20 14G15 14H40 14K22)},
  MRNUMBER = {3349314},
MRREVIEWER = {Yves\ Aubry},
       DOI = {10.1112/S1461157014000461},
       URL = {https://doi.org/10.1112/S1461157014000461},
}

@article {BrokerLauterSutherland2012,
    AUTHOR = {Br\"oker, Reinier and Lauter, Kristin and Sutherland, Andrew V.},
     TITLE = {Modular polynomials via isogeny volcanoes},
   JOURNAL = {Math. Comp.},
  FJOURNAL = {Mathematics of Computation},
    VOLUME = {81},
      YEAR = {2012},
    NUMBER = {278},
     PAGES = {1201--1231},
      ISSN = {0025-5718,1088-6842},
   MRCLASS = {11Y16 (11G15 11G20)},
  MRNUMBER = {2869057},
MRREVIEWER = {Claus\ Diem},
       DOI = {10.1090/S0025-5718-2011-02508-1},
       URL = {https://doi.org/10.1090/S0025-5718-2011-02508-1},
}

@article {BruinDoerksen2011,
    AUTHOR = {Bruin, Nils and Doerksen, Kevin},
     TITLE = {The arithmetic of genus two curves with {$(4,4)$}-split
              {J}acobians},
   JOURNAL = {Canad. J. Math.},
  FJOURNAL = {Canadian Journal of Mathematics. Journal Canadien de
              Math\'ematiques},
    VOLUME = {63},
      YEAR = {2011},
    NUMBER = {5},
     PAGES = {992--1024},
      ISSN = {0008-414X,1496-4279},
   MRCLASS = {11G30 (14H40)},
  MRNUMBER = {2866068},
MRREVIEWER = {Benjamin Smith},
       DOI = {10.4153/CJM-2011-039-3},
       URL = {https://doi.org/10.4153/CJM-2011-039-3},
}

@inproceedings {Cassels1978,
    AUTHOR = {Cassels, J. W. S.},
     TITLE = {Rational quadratic forms},
    SERIES = {London Mathematical Society Monographs},
    VOLUME = {13},
 PUBLISHER = {Academic Press, Inc.},
   ADDRESS = {London--New York},
      YEAR = {1978},
     PAGES = {xvi+413},
      ISBN = {0-12-163260-1},
   MRCLASS = {10C05 (10-01 15A63)},
  MRNUMBER = {522835},
MRREVIEWER = {Charles\ J.\ Parry},
}

@book {Cox2022,
    AUTHOR = {Cox, David A.},
     TITLE = {Primes of the form {$x^2+ny^2$}: {F}ermat, class field
              theory, and complex multiplication},
   EDITION = {Third},
      NOTE = {With contributions by Roger Lipsett},
 PUBLISHER = {AMS Chelsea Publishing},
   ADDRESS = {Providence, RI},
      YEAR = {2022},
     PAGES = {xv+533},
      ISBN = {[9781470470289]; [9781470471835]},
   MRCLASS = {11A41 (11F11 11R11 11R16 11R18 11R37 11Y11)},
  MRNUMBER = {4502401},
}

@article {Dickson1926,
    AUTHOR = {Dickson, Leonard Eugene},
     TITLE = {Ternary quadratic forms and congruences},
   JOURNAL = {Ann. of Math. (2)},
  FJOURNAL = {Annals of Mathematics. Second Series},
    VOLUME = {28},
      YEAR = {1926/27},
    NUMBER = {1-4},
     PAGES = {333--341},
      ISSN = {0003-486X,1939-8980},
   MRCLASS = {99-04},
  MRNUMBER = {1502786},
       DOI = {10.2307/1968378},
       URL = {https://doi.org/10.2307/1968378},
}

@article {Dickson1927,
    AUTHOR = {Dickson, Leonard Eugene},
     TITLE = {Integers represented by positive ternary quadratic forms},
   JOURNAL = {Bull. Amer. Math. Soc.},
  FJOURNAL = {Bulletin of the American Mathematical Society},
    VOLUME = {33},
      YEAR = {1927},
    NUMBER = {1},
     PAGES = {63--70},
      ISSN = {0002-9904},
   MRCLASS = {99-04},
  MRNUMBER = {1561323},
       DOI = {10.1090/S0002-9904-1927-04312-9},
       URL = {https://doi.org/10.1090/S0002-9904-1927-04312-9},
}

@article {EarnestGunawardana2021,
    AUTHOR = {Earnest, A. G. and Gunawardana, B. L. K.},
     TITLE = {Local criteria for universal and primitively universal
              quadratic forms},
   JOURNAL = {J. Number Theory},
  FJOURNAL = {Journal of Number Theory},
    VOLUME = {225},
      YEAR = {2021},
     PAGES = {260--280},
      ISSN = {0022-314X,1096-1658},
   MRCLASS = {11E08 (11E12 11E20 11E25)},
  MRNUMBER = {4235261},
       DOI = {10.1016/j.jnt.2021.02.005},
       URL = {https://doi.org/10.1016/j.jnt.2021.02.005},
}

@incollection {FreyKani1991,
    AUTHOR = {Frey, Gerhard and Kani, Ernst},
     TITLE = {Curves of genus {$2$} covering elliptic curves and an
              arithmetical application},
 BOOKTITLE = {Arithmetic algebraic geometry ({T}exel, 1989)},
    SERIES = {Progr. Math.},
    VOLUME = {89},
     PAGES = {153--176},
 PUBLISHER = {Birkh\"auser Boston},
   ADDRESS = {Boston, MA},
      YEAR = {1991},
      ISBN = {0-8176-3513-0},
   MRCLASS = {14G40 (11G05)},
  MRNUMBER = {1085258},
MRREVIEWER = {Joseph\ H.\ Silverman},
       DOI = {10.1007/978-1-4612-0457-2\_7},
       URL = {https://doi.org/10.1007/978-1-4612-0457-2_7},
}

@article {Goursat1885a,
    AUTHOR = {Goursat, \'Edouard},
     TITLE = {Sur la r\'eduction des int\'egrales hyperelliptiques},
   JOURNAL = {Bull. Soc. Math. France},
  FJOURNAL = {Bulletin de la Soci\'et\'e{} Math\'ematique de France},
    VOLUME = {13},
      YEAR = {1885},
     PAGES = {143--162},
      ISSN = {0037-9484},
   MRCLASS = {99-04},
  MRNUMBER = {1503964},
       DOI = {10.24033/bsmf.300},
       URL = {https://doi.org/10.24033/bsmf.300},
}

@article{Goursat1885b,
    AUTHOR = {Goursat, \'Edouard},
     TITLE = {Sur un cas de r\'eduction des int\'egrales hyperelliptiques du second genre},
  FJOURNAL = {Comptes Rendus Hebdomadaires des S{\'e}ances de l'Acad{\'e}mie des Sciences, Paris},
   JOURNAL = {C. R. Acad. Sci., Paris},
      ISSN = {0001-4036},
    VOLUME = {100},
     PAGES = {622--624},
      YEAR = {1885},
       URL = {https://gallica.bnf.fr/ark:/12148/bpt6k3056t/f622.item},
}

@article{Hermite1876,
    AUTHOR = {Hermite, Charles},
     TITLE = {Sur un exemple de r\'eduction d'int\'egrales ab\'eliennes aux fonctions elliptiques},
  FJOURNAL = {Annales de la Societé Scientifique de Bruxelles. Série I.},
   JOURNAL = {Ann. Soc. Sci. Bruxelles S\'er. I},
    VOLUME = {1},
     PAGES = {1--16},
      YEAR = {1876},
       URL = {https://archive.org/details/annalesdelasoci1187soci_0/page/n225/},
}

@article {HoweLeprevostPoonen2000,
    AUTHOR = {Howe, Everett W. and Lepr\'evost, Franck and Poonen, Bjorn},
     TITLE = {Large torsion subgroups of split {J}acobians of curves of
              genus two or three},
   JOURNAL = {Forum Math.},
  FJOURNAL = {Forum Mathematicum},
    VOLUME = {12},
      YEAR = {2000},
    NUMBER = {3},
     PAGES = {315--364},
      ISSN = {0933-7741,1435-5337},
   MRCLASS = {11G30 (14H25 14H40)},
  MRNUMBER = {1748483},
MRREVIEWER = {Edward\ F.\ Schaefer},
       DOI = {10.1515/form.2000.008},
       URL = {https://doi.org/10.1515/form.2000.008},
}

@article{Humbert1899,
    AUTHOR = {Humbert, Georges},
     TITLE = {Sur les fonctiones ab\'eliennes singuli\`eres. {P}remier M\'emoire},
   JOURNAL = {J. Math. Pures Appl. (5)},
  FJOURNAL = {Journal de Math\'ematiques Pures et Appliqu\'ees. Cinquième Série},
    VOLUME = {5},
      YEAR = {1899},
     PAGES = {233--350},
       URL = {https://gallica.bnf.fr/ark:/12148/bpt6k9713343h/f241.item},
}

@article{Humbert1900,
    AUTHOR = {Humbert, Georges},
     TITLE = {Sur les fonctiones ab\'eliennes singuli\`eres. {D}euxi\`eme M\'emoire},
   JOURNAL = {J. Math. Pures Appl. (5)},
  FJOURNAL = {Journal de Math\'ematiques Pures et Appliqu\'ees. Cinquième Série},
    VOLUME = {6},
      YEAR = {1900},
     PAGES = {279--386},
       URL = {https://gallica.bnf.fr/ark:/12148/bpt6k1074659/f282.item},
}

@article{Humbert1901,
    AUTHOR = {Humbert, Georges},
     TITLE = {Sur les fonctiones ab\'eliennes singuli\`eres. {T}roisi\`eme M\'emoire},
   JOURNAL = {J. Math. Pures Appl. (5)},
  FJOURNAL = {Journal de Math\'ematiques Pures et Appliqu\'ees. Cinquième Série},
    VOLUME = {7},
      YEAR = {1901},
     PAGES = {97--123},
       URL = {https://gallica.bnf.fr/ark:/12148/bpt6k9732688c/f105.item},
}

@article{Jacobi1832,
    AUTHOR = {Jacobi, Carl Gustav Jacob},
     TITLE = {Anzeige von {L}egendre \emph{Trait\'e des fonctions elliptiques, troisi\`eme suppl\'ement}},
   JOURNAL = {J. Reine Angew. Math.},
  FJOURNAL = {Journal für die Reine und Angewandte Mathematik. [Crelle's Journal]},
    VOLUME = {8},
      YEAR = {1832},
     PAGES = {413--417},
       URL = {https://gdz.sub.uni-goettingen.de/id/PPN243919689_0008?tify={"pages":[423,424,425,426,427],"view":""}},
}

@incollection{Jacobi:Werke1,
    AUTHOR = {Jacobi, Carl Gustav Jacob},
     TITLE = {Anzeige von {L}egendre \emph{Trait\'e des fonctions elliptiques, troisi\`eme suppl\'ement}},
 BOOKTITLE = {Gesammelte {W}erke. {B}\"ande I},
 PUBLISHER = {Verlag von G. Reimer},
   ADDRESS = {Berlin},
      YEAR = {1881},
     PAGES = {373--382},
       URL = {https://archive.org/details/gesammelwerke01jacorich/page/n390/mode/1up},
}

@article {Kani1997,
    AUTHOR = {Kani, Ernst},
     TITLE = {The number of curves of genus two with elliptic differentials},
   JOURNAL = {J. Reine Angew. Math.},
  FJOURNAL = {Journal f\"ur die Reine und Angewandte Mathematik. [Crelle's
              Journal]},
    VOLUME = {485},
      YEAR = {1997},
     PAGES = {93--121},
      ISSN = {0075-4102,1435-5345},
   MRCLASS = {14H30},
  MRNUMBER = {1442190},
MRREVIEWER = {H.\ Lange},
       DOI = {10.1515/crll.1997.485.93},
       URL = {https://doi.org/10.1515/crll.1997.485.93},
}

@article {Kani2019,
    AUTHOR = {Kani, Ernst},
     TITLE = {Elliptic subcovers of a curve of genus $2$. {I}. {T}he isogeny
              defect},
   JOURNAL = {Ann. Math. Qu\'e.},
  FJOURNAL = {Annales Math\'ematiques du Qu\'ebec},
    VOLUME = {43},
      YEAR = {2019},
    NUMBER = {2},
     PAGES = {281--303},
      ISSN = {2195-4755,2195-4763},
   MRCLASS = {14H30 (14H05 14H25 14H40)},
  MRNUMBER = {3996071},
MRREVIEWER = {Giancarlo\ Urz\'ua},
       DOI = {10.1007/s40316-018-0105-6},
       URL = {https://doi.org/10.1007/s40316-018-0105-6},
}

@misc {Kani2025preprint,
    AUTHOR = {Kani, Ernst},
     TITLE = {The refined {H}umbert Invariant for abelian product surfaces with
              complex multiplication},
HOWPUBLISHED = {preprint},
      YEAR = {2025},
       URL = {https://mast.queensu.ca/~kani/papers/refHum5.pdf},
}

@article {KaniSchanz1998,
    AUTHOR = {Kani, Ernst and Schanz, Wolfgang},
     TITLE = {Modular diagonal quotient surfaces},
   JOURNAL = {Math. Z.},
  FJOURNAL = {Mathematische Zeitschrift},
    VOLUME = {227},
      YEAR = {1998},
    NUMBER = {2},
     PAGES = {337--366},
      ISSN = {0025-5874,1432-1823},
   MRCLASS = {14G35 (11F41 11G18)},
  MRNUMBER = {1609061},
MRREVIEWER = {Yoshinori\ Hamahata},
       DOI = {10.1007/PL00004379},
       URL = {https://doi.org/10.1007/PL00004379},
}

@incollection {Katz1981,
    AUTHOR = {Katz, Nicholas M.},
     TITLE = {Serre-{T}ate local moduli},
 BOOKTITLE = {Algebraic surfaces ({O}rsay, 1976--78)},
    SERIES = {Lecture Notes in Math.},
    VOLUME = {868},
     PAGES = {138--202},
 PUBLISHER = {Springer},
   ADDRESS = {Berlin},
      YEAR = {1981},
      ISBN = {3-540-10842-4},
   MRCLASS = {14K10},
  MRNUMBER = {638600},
MRREVIEWER = {William\ E.\ Lang},
       DOI = {10.1007/BFb0090648},
       URL = {https://doi.org/10.1007/BFb0090648},
}

@article {Konigsberger1867,
    AUTHOR = {K\"onigsberger, Leo},
     TITLE = {Ueber die {T}ransformation des zweiten {G}rades f\"ur die
              {A}belschen {F}unctionen erster {O}rdnung},
   JOURNAL = {J. Reine Angew. Math.},
  FJOURNAL = {Journal f\"ur die Reine und Angewandte Mathematik. [Crelle's
              Journal]},
    VOLUME = {67},
      YEAR = {1867},
     PAGES = {58--77},
      ISSN = {0075-4102,1435-5345},
   MRCLASS = {99-04},
  MRNUMBER = {1579358},
       URL = {https://gdz.sub.uni-goettingen.de/id/PPN243919689_0067?tify={"pages":[62],"view":""}},
}

@article {Kowalevski1884,
    AUTHOR = {Kowalevski, Sophie},
     TITLE = {{\"U}ber die {R}eduction einer bestimmten {K}lasse {A}bel'scher
              {I}ntegrale {$3^{\text{ten}}$} {R}anges auf elliptische
              {I}ntegrale},
   JOURNAL = {Acta Math.},
  FJOURNAL = {Acta Mathematica},
    VOLUME = {4},
      YEAR = {1884},
    NUMBER = {1},
     PAGES = {393--414},
      ISSN = {0001-5962,1871-2509},
   MRCLASS = {99-04},
  MRNUMBER = {1554643},
       DOI = {10.1007/BF02418424},
       URL = {https://doi.org/10.1007/BF02418424},
}

@article {Kuhn1988,
    AUTHOR = {Kuhn, Robert M.},
     TITLE = {Curves of genus {$2$} with split {J}acobian},
   JOURNAL = {Trans. Amer. Math. Soc.},
  FJOURNAL = {Transactions of the American Mathematical Society},
    VOLUME = {307},
      YEAR = {1988},
    NUMBER = {1},
     PAGES = {41--49},
      ISSN = {0002-9947,1088-6850},
   MRCLASS = {14H40 (11G10)},
  MRNUMBER = {936803},
MRREVIEWER = {H.-G.\ R\"uck},
       DOI = {10.2307/2000749},
       URL = {https://doi.org/10.2307/2000749},
}

@article {Kumar2015,
    AUTHOR = {Kumar, Abhinav},
     TITLE = {Hilbert modular surfaces for square discriminants and elliptic
              subfields of genus 2 function fields},
   JOURNAL = {Res. Math. Sci.},
  FJOURNAL = {Research in the Mathematical Sciences},
    VOLUME = {2},
      YEAR = {2015},
     PAGES = {Art. 24, 46},
      ISSN = {2522-0144,2197-9847},
   MRCLASS = {11F41 (14H40 14J28)},
  MRNUMBER = {3427148},
MRREVIEWER = {Michael\ M.\ Schein},
       DOI = {10.1186/s40687-015-0042-9},
       URL = {https://doi.org/10.1186/s40687-015-0042-9},
}

@book{Legendre1798,
    AUTHOR = {Legendre, Adrien-Marie},
     TITLE = {Essai sur la th{\'e}orie des nombres},
 PUBLISHER = {Chez Duprat},
   ADDRESS = {Paris},
      YEAR = {1798},
       URL = {http://gallica.bnf.fr/ark:/12148/btv1b8626880r},
}

@book {Legendre1828,
    AUTHOR = {Legendre, Adrien-Marie},
     TITLE = {Trait\'e des fonctions elliptiques et des int\'egrales
              {E}ul\'eriennes, avec des tables pour en faciliter le calcul
              num\'erique. {T}ome troisi\`eme},
 PUBLISHER = {Imprimerie de Huzard-Courcier},
   ADDRESS = {Paris},
      YEAR = {1828},
       URL = {https://doi.org/10.3931/e-rara-59406},
}

@article {MagaardShaskaVolklein2009,
    AUTHOR = {Magaard, Kay and Shaska, Tanush and V\"olklein, Helmut},
     TITLE = {Genus $2$ curves that admit a degree $5$ map to an elliptic curve},
   JOURNAL = {Forum Math.},
  FJOURNAL = {Forum Mathematicum},
    VOLUME = {21},
      YEAR = {2009},
    NUMBER = {3},
     PAGES = {547--566},
      ISSN = {0933-7741,1435-5337},
   MRCLASS = {14H45 (14H10 14H30 14H52)},
  MRNUMBER = {2526800},
MRREVIEWER = {James\ N.\ Brawner},
       DOI = {10.1515/FORUM.2009.027},
       URL = {https://doi.org/10.1515/FORUM.2009.027},
}

@article {Norman1981,
    AUTHOR = {Norman, Peter},
     TITLE = {Lifting abelian varieties},
   JOURNAL = {Invent. Math.},
  FJOURNAL = {Inventiones Mathematicae},
    VOLUME = {64},
      YEAR = {1981},
    NUMBER = {3},
     PAGES = {431--443},
      ISSN = {0020-9910,1432-1297},
   MRCLASS = {14K10 (14D15 14L05)},
  MRNUMBER = {632983},
MRREVIEWER = {F.\ Oort},
       DOI = {10.1007/BF01389275},
       URL = {https://doi.org/10.1007/BF01389275},
}

@article {Picard1883,
    AUTHOR = {Picard, Emile},
     TITLE = {Sur la r\'eduction du nombre des p\'eriodes des int\'egrales
              ab\'eliennes et, en particulier, dans le cas des courbes du
              second genre},
   JOURNAL = {Bull. Soc. Math. France},
  FJOURNAL = {Bulletin de la Soci\'et\'e{} Math\'ematique de France},
    VOLUME = {11},
      YEAR = {1883},
     PAGES = {25--53},
      ISSN = {0037-9484},
   MRCLASS = {99-04},
  MRNUMBER = {1503909},
       URL = {http://www.numdam.org/item?id=BSMF_1883__11__25_1},
}

@article {Picard1884,
    AUTHOR = {Picard, Emile},
     TITLE = {Remarque sur la r\'eduction des int\'egrales ab\'eliennes aux
              int\'egrales elliptiques},
   JOURNAL = {Bull. Soc. Math. France},
  FJOURNAL = {Bulletin de la Soci\'et\'e{} Math\'ematique de France},
    VOLUME = {12},
      YEAR = {1884},
     PAGES = {153--155},
      ISSN = {0037-9484},
   MRCLASS = {99-04},
  MRNUMBER = {1503942},
       URL = {http://www.numdam.org/item?id=BSMF_1884__12__153_0},
}

@article {Poincare1884,
    AUTHOR = {Poincar\'e, Henri},
     TITLE = {Sur la r\'eduction des int\'egrales ab\'eliennes},
   JOURNAL = {Bull. Soc. Math. France},
  FJOURNAL = {Bulletin de la Soci\'et\'e{} Math\'ematique de France},
    VOLUME = {12},
      YEAR = {1884},
     PAGES = {124--143},
      ISSN = {0037-9484},
   MRCLASS = {99-04},
  MRNUMBER = {1503940},
       URL = {http://www.numdam.org/item?id=BSMF_1884__12__124_0},
}

@article {Shaska2004,
    AUTHOR = {Shaska, Tanush},
     TITLE = {Genus $2$ fields with degree $3$ elliptic subfields},
   JOURNAL = {Forum Math.},
  FJOURNAL = {Forum Mathematicum},
    VOLUME = {16},
      YEAR = {2004},
    NUMBER = {2},
     PAGES = {263--280},
      ISSN = {0933-7741,1435-5337},
   MRCLASS = {11G30 (14H05)},
  MRNUMBER = {2039100},
MRREVIEWER = {Jordi\ Gu\`ardia},
       DOI = {10.1515/form.2004.013},
       URL = {https://doi.org/10.1515/form.2004.013},
}

@article {vanderWaerden1956,
    AUTHOR = {van der Waerden, Bartel Leendert},
     TITLE = {Die {R}eduktionstheorie der positiven quadratischen {F}ormen},
   JOURNAL = {Acta Math.},
  FJOURNAL = {Acta Mathematica},
    VOLUME = {96},
      YEAR = {1956},
     PAGES = {265--309},
      ISSN = {0001-5962,1871-2509},
   MRCLASS = {10.0X},
  MRNUMBER = {82513},
MRREVIEWER = {B.\ W.\ Jones},
       DOI = {10.1007/BF02392364},
       URL = {https://doi.org/10.1007/BF02392364},
}
\end{document}